\documentclass[12pt,reqno]{amsart}
\usepackage{amsmath}
\usepackage{amsthm}
\usepackage{amsfonts}
\usepackage{amssymb}
\usepackage{mathrsfs}
\usepackage[margin=1.35in, top =1.25in, bottom = 1.25in]{geometry}
\usepackage{setspace}
\usepackage{tikz}
\usetikzlibrary{matrix, arrows}
\usepackage{tikz-cd}
\usepackage[cal=euler]{mathalfa}
\usepackage{enumitem}
\usepackage[colorlinks, breaklinks]{hyperref}
\hypersetup{linkcolor=black,citecolor=blue,filecolor=black,urlcolor=black} 
\usepackage{pdflscape}
\usepackage{array}
\usepackage{adjustbox}


\DeclareMathOperator{\ass}{Ass}
\DeclareMathOperator{\cls}{cls}				
\newcommand{\dd}{\partial}
\renewcommand{\epsilon}{\varepsilon}
\DeclareMathOperator{\hgt}{ht}  			
\DeclareMathOperator{\init}{in}				
\newcommand{\iso}{\cong}				
\newcommand{\kk}{k}					
\newcommand{\longto}{\longrightarrow}
\DeclareMathOperator{\pd}{pd}  			
\renewcommand{\phi}{\varphi}
\newcommand{\PP}{\mathbb{P}}			
\newcommand{\QQ}{\mathbb{Q}}
\DeclareMathOperator{\reg}{reg}			
\DeclareMathOperator{\Span}{Span}
\DeclareMathOperator{\syz}{Syz}
\newcommand{\tensor}{\otimes}
\DeclareMathOperator{\tor}{Tor}


\newtheorem{thm}{Theorem}[section]
\newtheorem{lemma}[thm]{Lemma}
\newtheorem{prop}[thm]{Proposition}
\newtheorem{cor}[thm]{Corollary}

\newtheorem*{main-thm}{Main Theorem}

\theoremstyle{definition}

\newtheorem{example}[thm]{Example}
\newtheorem{rmk}[thm]{Remark}
\newtheorem{question}[thm]{Question}
\newtheorem*{notation}{Notation}

\numberwithin{equation}{section}
\numberwithin{figure}{section}


\title{Koszul almost complete intersections}
\author{Matthew Mastroeni}
\address{Department of Mathematics, University of Illinois, Urbana, IL 61801}
\email{mastroe2@illinois.edu}
\date{}
\keywords{Betti numbers, Koszul algebras, almost complete intersections}

\begin{document}
\maketitle

\begin{abstract}
Let $R = S/I$ be a quotient of a standard graded polynomial ring $S$ by an ideal $I$ generated by quadrics.  If $R$ is Koszul, a question of Avramov, Conca, and Iyengar asks whether the Betti numbers of $R$ over $S$ can be bounded above by binomial coefficients on the minimal number of generators of $I$.  Motivated by previous results for Koszul algebras defined by three quadrics, we give a complete classification of the structure of Koszul almost complete intersections and, in the process, give an affirmative answer to the above question for all such rings.
\end{abstract}

\begin{spacing}{1.15}
\section{Introduction}

Let $\kk$ be a field, $S$ be a standard graded polynomial ring over $\kk$, $I \subseteq S$ be a graded ideal, and $R = S/I$.  We say that $R$ is a \emph{Koszul algebra}\index{Koszul!algebra} if $\kk \iso R/R_+$ has a linear free resolution over $R$.  Many rings arising from algebraic geometry are Koszul, including the coordinate rings of Grassmannians \cite{Kempf}, sets of $r \leq 2n$ points in general position in $\PP^n$ \cite{points:in:projective:space}, and canonical embeddings of smooth curves under mild restrictions \cite{canonical:rings:of:curves}, as well as all suitably high Veronese subrings of any standard graded algebra \cite{high:Veronese:subrings:are:Koszul}.  However, the simplest examples of Koszul algebras, due to Fr\"oberg \cite{quadratic:monomial:ideals:are:Koszul}, are quotients by quadratic monomial ideals, and a guiding heuristic in the study of Koszul algebras has been that any reasonable property of algebras defined by quadratic monomial ideals should also hold for Koszul algebras; for example, see \cite{free:resolutions:over:Koszul:algebras}, \cite{Koszul:algebras:and:their:syzygies}, \cite{subadditivity:of:Betti:numbers}.  Among such properties, considering the Taylor resolution for an algebra defined by a quadratic monomial ideal leads to the following question about the Betti numbers of a Koszul algebra.

\begin{question}[{\cite[6.5]{free:resolutions:over:Koszul:algebras}}] \label{Betti:number:bound:for:Koszul:algebras}
If $R$ is Koszul and $I$ is minimally generated by $g$ elements, does the following inequality hold for all $i$?
\[ \beta^S_i(R) \leq \binom{g}{i} \]
In particular, is $\pd_S R \leq g$?
\end{question}

The above questions are known to have affirmative answers when $R$ is LG-quadratic (see next section) and for arbitrary Koszul algebras when $g \leq 3$ by \cite[4.5]{Koszul:algebras:defined:by:3:quadrics}.  Recall that $R$ or $I$ is called an \emph{almost complete intersection} if $I$ is minimally generated by $\hgt I+1$ elements.  The motivation for studying Koszul almost complete intersections comes from the fact that the above question is easily seen to have an affirmative answer when $I$ is a complete intersection or has height one so that the interesting case for Koszul algebras defined by three quadrics is precisely when $I$ is an almost complete intersection.  Our main results (\ref{Koszul:algebras:with:one:linear:syzygy}, \ref{Koszul:ACI's:with:two:linear:syzygies}, \ref{Koszul:ACI's:are:LG-quadratic}, \ref{linear:syzygies:of:quadric:ACIs}) show that Question \ref{Betti:number:bound:for:Koszul:algebras} has an affirmative answer for Koszul almost complete intersections generated by any number of quadrics; they are summarized in the theorem below.

\begin{main-thm}
Let $R = S/I$ be a Koszul almost complete intersection with $I$ minimally generated by $g+1$ quadrics for some $g \geq 1$.  Then $\beta_{2,3}^S(R) \leq 2$, and: \vspace{2ex}
\begin{enumerate}[label = \textnormal{(\alph*)}]
\item If $\beta_{2,3}^S(R) = 1$, then $I = (xz, zw, q_3, \dots, q_{g+1})$ for some linear forms $x$, $z$, and $w$ and some regular sequence of quadrics $q_3, \dots, q_{g+1}$ on $S/(xz,zw)$.
\vspace{2ex}
\item If $\beta_{2,3}^S(R) = 2$, then $I = I_2(M) + (q_4, \dots, q_{g+1})$ for some $3 \times 2$ matrix of linear forms $M$ with $\hgt I_2(M) = 2$ and some regular sequence of quadrics $q_4, \dots, q_{g+1}$ on $S/I_2(M)$.
\end{enumerate}
\vspace{1ex}
Furthermore, $R$ is LG-quadratic and, therefore, satisfies $\beta_i^S(R) \leq \binom{g+1}{i}$ for all $i$.
\end{main-thm}

The division of the rest of the paper is as follows.  We recount various properties and examples of Koszul algebras and their Betti tables in \S \ref{basic:properties:and:examples} which will be important in the sequel.  In \S \ref{Koszul:ACI's}, we determine the structure of Koszul almost complete intersections with either one or two linear second syzygies.  We then complete the classification of Koszul almost complete intersections in \S \ref{linear:syzygies:of:quadratic:ACI's} by showing that every quadratic almost complete intersection has at most two linear second syzygies. 

\begin{notation} 
Throughout the remainder of the paper, the following notation will be in force unless specifically stated otherwise.  Let $\kk$ be a fixed ground field of arbitrary characteristic, $S$ be a standard graded polynomial ring over $\kk$,  $I \subseteq S$ be a proper graded ideal, and $R = S/I$.  Recall that the ideal $I$ is called \emph{nondegenerate}\index{nondegenerate} if it does not contain any linear forms.  We can always reduce to a presentation for $R$ with $I$ nondegenerate by killing a basis for the linear forms contained in $I$, and we will assume that this is the case throughout.  We denote the irrelevant ideal of $R$ by $R_+ = \bigoplus_{n \geq 1} R_n$.
\end{notation}

\section{Koszul Algebras and Their Betti Tables}
\label{basic:properties:and:examples}

If $R$ is a Koszul algebra, it is well-known that its defining ideal $I$ must be generated by quadrics, but not every ideal generated by quadrics defines a Koszul algebra.  We have already noted in the introduction that every quadratic monomial ideal defines a Koszul algebra.  More generally, we say that $R$ or $I$ is \emph{G-quadratic}\index{G-quadratic} if, after a suitable linear change of coordinates $\phi: S \to S$, the ideal $\phi(I)$ has a Gr\"obner basis consisting of quadrics.  We also say that $R$ or $I$ is \emph{LG-quadratic} if $R$ is a quotient of a G-quadratic algebra $A$ by an $A$-sequence of linear forms.  Every G-quadratic algebra is Koszul by upper semicontinuity of the Betti numbers; see \cite[3.13]{upper:semicontinuity}.  It then follows from Proposition \ref{passing:Koszulness:to:and:from:quotients} below that every LG-quadratic algebra is also Koszul. In particular, every complete intersection generated by quadrics is LG-quadratic by an argument due to Caviglia.  Indeed, if $R = S/(q_1, \dots, q_g)$ where $q_1, \dots, q_g$ is a regular sequence of quadrics, we can take $A = S[y_1,\dots, y_g]/(y_1^2+q_1, \dots, y^2_g + q_g)$ so that $A/(y_1, \dots, y_g) \iso R$.  By choosing a monomial order in which the $y_i$ are greater than every monomial in the variables of $S$, it follows from \cite[15.15]{Eisenbud} that $A$ is G-quadratic and that the $y_i^2+q_i$ form a regular sequence so that $\hgt_A (y_1, \dots, y_g) = \dim A - \dim R = \dim S - \dim R = g$ and $y_1, \dots, y_g$ is an $A$-sequence.  In summary, we have the following implications.

\vspace{2ex}
\begin{center}
\begin{tikzcd}
\text{G-quadratic} \rar[Rightarrow] & \text{LG-quadratic} \rar[Rightarrow] & \text{Koszul}
\\
& \text{Quadratic CI's} \uar[Rightarrow]
\end{tikzcd}
\end{center}
 \vspace{2ex}

Each of the above implications is strict.  Clearly, any quadratic monomial ideal which is not a complete intersection, such as $(xy, xz, xw) \subseteq \kk[x,y,z,w]$, is LG-quadratic.  In \cite[1.14]{Koszul:algebras:and:their:syzygies}, it is observed that $(x^2+yz, y^2+xz, z^2+xy) \subseteq \QQ[x,y,z]$ is an Artinian quadratic complete intersection which cannot be G-quadratic since it does not contain the square of a linear form.  We will see an example of a Koszul algebra which is not LG-quadratic below.

\begin{rmk}
If $R = S/I$ is G-quadratic and $J$ is a quadratic initial ideal of $I$, then $\beta_1^S(R) = \beta_1^S(S/J)$ since $I$ is generated by quadrics and $R$ and $S/J$ have the same Hilbert function.  Consequently, $\beta_i(R) \leq \beta_i(S/J) \leq \binom{g}{i}$ for all $i$ by upper semicontinuity of the Betti numbers and the Taylor resolution for $S/J$.  Since killing a regular sequence of linear forms does not affect the Betti numbers of $R$, it follows that Question \ref{Betti:number:bound:for:Koszul:algebras} has an affirmative answer for every LG-quadratic algebra.
\end{rmk}

We will be specifically interested in the \emph{graded Betti numbers} of a Koszul algebra $R$, which are defined by $\beta_{i,j}^S(R) = \dim_\kk \tor_i^S(\kk, R)_j$ and related to the usual Betti numbers by $\beta_i^S(R) = \sum_j \beta_{i,j}^S(R)$.  This information is usually organized into a table, called the Betti table of $R$; see below for an example.  As we have already pointed out in the the introduction, quadratic monomial ideals serve as a useful benchmark in the study of Koszul algebras.  In particular, we note that the square-free quadratic monomial ideals are precisely edge ideals.  Recall that, if $G$ is a graph with vertex set $[n] = \{1, \dots, n\}$, the \emph{edge ideal}\index{edge ideal} of $G$ is the ideal of the polynomial ring $S = \kk[x_1, \dots, x_n]$ defined by $I_G = (x_ix_j \mid ij \in E(G))$.  We note that every quadratic monomial ideal can be obtained as the image of an edge ideal modulo a regular sequence of linear forms via polarization \cite[4.2.16]{Bruns:Herzog}, and hence, studying the Betti tables of all quadratic monomial ideals with $g$ generators is equivalent to studying the Betti tables of edge ideals of graphs with $g$ edges, which are reasonably simple to enumerate in practice for small values of $g$.  

In fact, a byproduct of the proof in \cite{Koszul:algebras:defined:by:3:quadrics} that every Koszul algebra defined by $g \leq 3$ quadrics satisfies Question \ref{Betti:number:bound:for:Koszul:algebras} is that every such algebra has the Betti table of some edge ideal.  This suggests taking the Betti tables of edge ideals as our guide for exploring possible patterns in Betti tables of Koszul algebras with more generators.  One can then easily compute that, for various values of $g$, there are only two possible Betti tables for almost complete intersection edge ideals with $g$ generators, one with a single linear syzygy and another with two.  The purpose of this paper is to show that this pattern holds more generally for all Koszul almost complete intersections.  However, the following example shows that the mantra that Koszul algebras are similar to quotients by quadratic monomial ideals must be taken with a grain of salt.

\begin{example}[{\cite[3.8]{Koszul:algebras:and:their:syzygies}}] \label{Koszul:but:not:LG-quadratic}
The ring $R = \kk[x,y,z,w]/(xy, xw, (x-y)z, z^2, x^2+zw)$ is Koszul by a filtration argument.  The minimal free resolution of $R$ over $S = \kk[x,y,z,w]$ can be computed via iterated mapping cones using the fact that $((xy, xw, z^2): (x-y)z) = (xy, xw, z)$ and $((xy, xw, z^2, (x-y)z) : x^2+zw) = (xw, y, z)$.  This yields the following Betti table for $R$, where the entry in column $i$ and row $j$ is $\beta_{i,i+j}^S(R)$ and zero entries are represented by ``$-$'' for readability.
 \vspace{1ex}
\begin{center}
   \begin{tabular}{c|cccccccc}
  & 0 & 1 & 2 & 3 & 4 \\ 
\hline 
0 & 1 & -- & --  & -- & -- \\ 
1 & -- & 5 & 4  & -- & -- \\
2 & -- & -- & 4 & 6 & 2 \\
\end{tabular}
\end{center}
 \vspace{1ex}
From the above Betti table, we see that the Hilbert series of $R$ is 
 \vspace{1ex}
\[ H_R(t) = \frac{1+2t-2t^2-2t^3+2t^4}{(1-t)^2} \;.  \vspace{1ex}\]
If $R$ were LG-quadratic, then the numerator of the Hilbert series must be the $h$-polynomial of a 5-generated edge ideal, since killing a regular sequence of linear forms and passing to a quadratic initial ideal do not change either the $h$-polynomial or the minimal number of generators of the defining ideal.  As noted above, we can easily compute that the $h$-polynomial of $R$ does not belong to any 5-generated edge ideal.  Hence, $R$ is not LG-quadratic, and in particular, the Betti table of $R$ is not the Betti table of any edge ideal.  This points to unexpected complications in trying to answer Question \ref{Betti:number:bound:for:Koszul:algebras} for Koszul algebras defined by $g \geq 5$ quadrics.
\end{example}

\begin{rmk}
We can compute the Betti tables of 5-generated edge ideals in the above example over a field of any characteristic by a result of Katzman, \cite[4.1]{characteristic:dependence:of:Betti:tables}.  However, Katzman also shows that the Betti tables of edge ideals do depend on the characteristic of the ground field in general. 
\end{rmk}

In the remainder of this section, we collect a few results about Koszul algebras that will be useful in the sequel.  The first of these results states how the Koszul property can be passed to and from quotient rings.

\begin{prop}[{\cite[\S 3.1, 2]{Koszul:algebras:and:regularity}}] \label{passing:Koszulness:to:and:from:quotients}
Let $S$ be a standard graded $\kk$-algebra and $R$ be a quotient ring of $S$.
\begin{enumerate}[label = \textnormal{(\alph*)}]
\item If $S$ is Koszul and $\reg_S(R) \leq 1$, then $R$ is Koszul.
\item If $R$ is Koszul and $\reg_S(R)$ is finite, then $S$ is Koszul.
\end{enumerate}
\end{prop}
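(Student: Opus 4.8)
The plan is to deduce both parts from the internal-degree behaviour of the Cartan--Eilenberg change-of-rings spectral sequence for the surjection $S \twoheadrightarrow R$, together with the standard dictionary between Koszulness and the regularity of the residue field. I would first record the facts I intend to use: a standard graded $\kk$-algebra $A$ is Koszul if and only if $t^A_i(\kk) := \sup\{j : \tor^A_i(\kk,\kk)_j \neq 0\}$ equals $i$ for every $i$, and --- by the Avramov--Peeva finiteness theorem --- it already suffices that $\reg_A(\kk) := \sup_i\,(t^A_i(\kk)-i)$ be finite; one always has $t^A_i(\kk) \geq i$, and likewise $t^S_i(R) \geq i$ by minimality of the $S$-free resolution of $R$. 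The spectral sequence I would invoke is
\[ E^2_{p,q} = \tor^R_p\big(\kk,\ \tor^S_q(R,\kk)\big) \Longrightarrow \tor^S_{p+q}(\kk,\kk), \]
which is first-quadrant and strictly compatible with internal degrees. Since $\tor^S_q(R,\kk)$ is annihilated by $\mathfrak{m}_S$ (we have tensored with $\kk$), it is annihilated by $\mathfrak{m}_R$, hence is a graded $\kk$-vector space; thus $E^2_{p,q} \iso \tor^R_p(\kk,\kk) \tensor_\kk \tor^S_q(R,\kk)$ and
\[ (E^2_{p,q})_m = \bigoplus_{a+b=m} \tor^R_p(\kk,\kk)_a \tensor_\kk \tor^S_q(R,\kk)_b. \]

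Part (b) should then be a short bookkeeping argument on the $E^2$ page. Assuming $R$ Koszul and $c := \reg_S(R) < \infty$, the factor $\tor^R_p(\kk,\kk)_a$ vanishes unless $a = p$ and $\tor^S_q(R,\kk)_b$ vanishes unless $b \leq q + c$, so $(E^2_{p,q})_m = 0$ unless $m \leq p+q+c$. Passing to $E^\infty$ and then to the abutment gives $\tor^S_n(\kk,\kk)_m = 0$ for $m > n+c$, i.e. $\reg_S(\kk) \leq c < \infty$, whence $S$ is Koszul by the finiteness theorem.

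For part (a) I would argue by induction on $p$ that $t^R_p(\kk) = p$, the cases $p \leq 1$ being immediate since $R$ is standard graded. Assuming the claim for all $p' < p$, the idea is to pin down $E^\infty_{p,0}$: it is one of the associated graded pieces of $\tor^S_p(\kk,\kk)$, hence vanishes in internal degrees $> p$ because $S$ is Koszul. As $E_{p,0}$ sits in the bottom row there are no incoming differentials, so $E^\infty_{p,0}$ is obtained from $E^2_{p,0} = \tor^R_p(\kk,\kk)$ by successively passing to the kernels of the outgoing differentials $d_r \colon E^r_{p,0} \to E^r_{p-r,\,r-1}$, $r \geq 2$. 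I would then check that each such target vanishes in internal degrees $j > p$: by the inductive hypothesis a nonzero $\tor^R_{p-r}(\kk,\kk)_a$ forces $a = p-r$, so the complementary degree is $b = j-(p-r) > r$, and then $\tor^S_{r-1}(R,\kk)_b = 0$ since $\reg_S(R) \leq 1$ gives $t^S_{r-1}(R) \leq r$. Hence in internal degrees $j > p$ all outgoing differentials vanish, so $\tor^R_p(\kk,\kk)_j = (E^2_{p,0})_j = (E^\infty_{p,0})_j = 0$; combined with $t^R_p(\kk) \geq p$ this gives $t^R_p(\kk) = p$, completing the induction and proving $R$ Koszul.

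The only genuinely external input is the Avramov--Peeva theorem that finiteness of $\reg_S(\kk)$ forces Koszulness; the existence, first-quadrant shape, and gradedness of the change-of-rings spectral sequence, and the triviality of the $R$-action on $\tor^S_q(R,\kk)$, are standard. I expect the point needing the most care to be the degree bookkeeping in (a) --- making sure that, in the internal degrees that matter, the vanishing of the targets of the $d_r$ really does identify $(E^\infty_{p,0})_j$ with $(E^2_{p,0})_j = \tor^R_p(\kk,\kk)_j$, so that the Koszulness of $S$ can be transferred. Once the induction is set up correctly this last step is mechanical.
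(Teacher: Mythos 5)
The paper does not prove this proposition; it is quoted from Conca--De Negri--Rossi, and the argument there is exactly the one you give: the change-of-rings spectral sequence $\tor^R_p(\kk,\tor^S_q(R,\kk))\Rightarrow\tor^S_{p+q}(\kk,\kk)$ with its internal grading, plus the Avramov--Peeva finiteness theorem for part (b). Your degree bookkeeping in both parts is correct (in (a), the vanishing of the targets $E^r_{p-r,r-1}$ in internal degrees $j>p$ does identify $(E^2_{p,0})_j$ with $(E^\infty_{p,0})_j$, since the bottom row receives no differentials), so this is a complete proof by essentially the standard route.
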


Compared with general quadratic algebras, the Betti tables of Koszul algebras are much more restricted.  The following result, discovered in \cite{Backelin} and \cite[4]{Kempf}, says that the Betti tables of Koszul algebras have nonzero entries only on or above the diagonal; see \cite[2.10]{Koszul:algebras:and:their:syzygies} for an easier argument using regularity.

\begin{lemma} \label{Koszul:algebras:have:subdiagonal:Betti:table}
If $R = S/I$ is a Koszul algebra, then $\beta_{i,j}^S(R) = 0$ for all $i$ and $j > 2i$.
\end{lemma}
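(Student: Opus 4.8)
The plan is to play the Koszul property of $R$ against the tautological ``Koszulness'' of the polynomial ring $S$ by means of the standard change-of-rings (Cartan--Eilenberg) spectral sequence attached to the surjection $S \to R$,
\[ E^2_{p,q} = \tor^R_p\bigl(\kk,\, \tor^S_q(R,\kk)\bigr) \Longrightarrow \tor^S_{p+q}(\kk,\kk), \]
a first-quadrant homological spectral sequence with differentials $d_r \colon E^r_{p,q} \to E^r_{p-r,\,q+r-1}$. Recall that $\beta^S_{i,j}(R) = \dim_\kk \tor^S_i(R,\kk)_j$; since $S$ is a polynomial ring, $\tor^S_n(\kk,\kk)$ is concentrated in internal degree $n$ for every $n$, and the hypothesis that $R$ is Koszul says precisely that $\tor^R_p(\kk,\kk)$ is concentrated in internal degree $p$ for every $p$.

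The first step is to put the $E^2$-page into usable form. The point is that $\tor^S_q(R,\kk)$ is a \emph{trivial} graded $R$-module: the $S$-action on it factors through $S/\mathfrak m_S = \kk$, so $\mathfrak m_R$ acts as zero, and hence $\tor^S_q(R,\kk) \iso \bigoplus_j \kk(-j)^{\beta^S_{q,j}(R)}$ as graded $R$-modules. Substituting this into the $E^2$-term gives $E^2_{p,q} \iso \bigoplus_j \tor^R_p(\kk,\kk)(-j)^{\beta^S_{q,j}(R)}$, so by Koszulness of $R$ the group $E^2_{p,q}$ is nonzero in internal degree $\ell$ only when $\ell = p + j$ for some $j$ with $\beta^S_{q,j}(R) \neq 0$.

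Then I would induct on $i \geq 1$ to prove $\beta^S_{i,j}(R) = 0$ for $j > 2i$, the base case $i=1$ needing only $\tor^S_0(R,\kk) = \kk$. Assuming the claim in all homological degrees below $i$, suppose for contradiction that $\beta^S_{i,j_0}(R) \neq 0$ for some $j_0 > 2i$ (so $j_0 > i$ as well). Then $E^2_{0,i} = \tor^S_i(R,\kk)$ has a nonzero component in internal degree $j_0$. Every differential \emph{out} of the bottom row lands in a zero group $E^r_{-r,\,i+r-1}$, so this component can only be destroyed by an incoming differential $d_r \colon E^r_{r,\,i-r+1} \to E^r_{0,i}$ with $r \geq 2$. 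But $E^r_{r,\,i-r+1}$ is a subquotient of $E^2_{r,\,i-r+1}$, whose internal degrees --- by the description of the $E^2$-page together with the inductive hypothesis applied to $\beta^S_{i-r+1,\bullet}(R)$, with the case $i-r+1 = 0$ covered by $\tor^S_0(R,\kk) = \kk$ --- all have the form $r + j'$ with $j' \leq 2(i-r+1)$, hence are at most $2i - r + 2 \leq 2i < j_0$. So the degree-$j_0$ component of $E^2_{0,i}$ persists to $E^\infty_{0,i}$; but $E^\infty_{0,i}$ is a graded subspace of $\tor^S_i(\kk,\kk)$ and therefore lives only in internal degree $i$ --- a contradiction that closes the induction.

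The part I expect to require the most care is the bookkeeping of internal degrees across the pages of the spectral sequence --- specifically, pinning down that incoming differentials to the bottom row can only affect internal degrees $\leq 2i$, which is exactly where the Koszulness of $R$, the (trivial) Koszulness of $S$, and the inductive hypothesis are used in concert. The observation that $\tor^S_q(R,\kk)$ is a trivial $R$-module is elementary but indispensable, since it is what makes the $E^2$-page transparent. A more economical proof is possible via regularity, as indicated in \cite[2.10]{Koszul:algebras:and:their:syzygies}.
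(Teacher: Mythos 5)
Your argument is correct, and it is complete: the change-of-rings spectral sequence $E^2_{p,q}=\tor^R_p(\kk,\tor^S_q(R,\kk))\Rightarrow\tor^S_{p+q}(\kk,\kk)$ is graded, the identification of $\tor^S_q(R,\kk)$ as a trivial graded $R$-module is justified by the agreement of the two $S$-actions on $\tor$, and the degree bookkeeping in the inductive step (incoming differentials to $E^r_{0,i}$ have sources concentrated in internal degrees at most $2i-r+2\leq 2i$, outgoing ones vanish, and $E^\infty_{0,i}=F_0\tor^S_i(\kk,\kk)$ lives in degree $i$) is exactly right. The paper does not prove this lemma at all --- it attributes the result to \cite{Backelin} and \cite[4]{Kempf} and points to \cite[2.10]{Koszul:algebras:and:their:syzygies} for a shorter argument via Castelnuovo--Mumford regularity. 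Your spectral-sequence proof is essentially the classical one underlying those references; what the regularity route buys is that the same induction gets packaged once and for all into inequalities relating $t_i^S(R)=\sup\{j:\beta^S_{i,j}(R)\neq 0\}$ to $t_j^R(\kk)$, so that Koszulness ($t_p^R(\kk)=p$) immediately yields $t_i^S(R)\leq 2i$ without re-running the page-by-page degree chase. Your version is more self-contained and makes visible exactly where the Koszulness of $R$ and of $S$ enter; either is acceptable here, since the lemma is only quoted in the paper.
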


In addition, the extremal portions of the Betti table of a Koszul algebra $R$, namely the diagonal entries and the linear strand of $I$, satisfy bounds similar to those in Question \ref{Betti:number:bound:for:Koszul:algebras}.

\begin{prop}[{\cite[3.4, 4.2]{Koszul:algebras:defined:by:3:quadrics}}] \label{Koszul:Betti:table:constraints}
Suppose that $R = S/I$ is Koszul and that $I$ is minimally generated by $g$ elements.  Then:
\begin{enumerate}[label = \textnormal{(\alph*)}]
\item $\beta_{i,i+1}^S(R) \leq \binom{g}{i}$ for $2 \leq i \leq g$, and if equality holds for $i = 2$, then $I$ has height one and a linear resolution of length $g$.
\item $\beta_{i,2i}^S(R) \leq \binom{g}{i}$ for $2 \leq i \leq g$, and if equality holds for some $i$, then $I$ is a complete intersection.
\end{enumerate}
\end{prop}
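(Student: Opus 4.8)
The plan is to analyze the two extreme strands of the Betti table separately, using in both cases the graded-commutative algebra structure on the Koszul homology $H_\bullet := \bigoplus_i H_i(K^S \tensor_S R) \iso \bigoplus_i \tor_i^S(R,\kk)$, where $K^S$ is the Koszul complex resolving $\kk$ over $S$. Since $I$ is generated by $g$ quadrics, $H_1 = H_{1,2}$ is $g$-dimensional and sits in internal degree $2$, while by Lemma \ref{Koszul:algebras:have:subdiagonal:Betti:table} each $H_i$ is supported in internal degrees $i+1, \dots, 2i$. Fix a minimal generating set $q_1, \dots, q_g$ of $I$, let $K(\underline q)$ be the Koszul complex of $S$ on $q_1, \dots, q_g$ — so $K(\underline q)_i = \bigwedge^i S^g \iso S(-2i)^{\binom gi}$ and its differential has entries among the $\pm q_j$ — and fix a comparison map $\psi \colon K(\underline q) \to F$ to the minimal $S$-free resolution $F$ of $R$ lifting $\operatorname{id}_R$.

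For part (b), consider the top-strand piece $H_{i,2i}$. Tensoring $\psi$ with $\kk$ kills the differential of $K(\underline q)$, so $\psi_i \tensor \kk$ induces a map $\bigwedge^i \kk^g = K(\underline q)_i \tensor \kk \to \tor_i^S(R,\kk)$ landing in internal degree $2i$; up to sign this is the multiplication map $\bigwedge^i H_1 \to H_{i,2i}$ of the Koszul homology algebra. The key point — where Koszulness enters — is that this map is \emph{surjective}; I would deduce this from the fact that the deviations (equivalently, the homotopy Lie algebra) of a Koszul algebra are concentrated in internal degree equal to homological degree, so $H_\bullet$ has no indecomposable element in the top strand beyond homological degree $1$, forcing $H_{i,2i}$ to be spanned by products of elements of $H_1$. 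Granting surjectivity, $\beta_{i,2i}^S(R) = \dim H_{i,2i} \le \binom gi$. For the equality clause, suppose $\beta_{i_0,2i_0}^S(R) = \binom{g}{i_0}$ for some $i_0 \ge 2$: then $\bigwedge^{i_0} H_1 \to H_{i_0,2i_0}$ is a surjection of $\kk$-spaces of equal dimension, hence an isomorphism, and multiplying by generic elements of $H_1$ propagates injectivity down to all $i \le i_0$; in particular at $i=2$ the $\binom g2$ Koszul syzygies remain independent modulo decomposables, which a diagram chase should turn into the statement $H_1(K(\underline q)) = 0$. Since the first Koszul homology of a sequence of positive-degree forms vanishes exactly when the sequence is regular, $q_1, \dots, q_g$ is a regular sequence and $I$ is a complete intersection.

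For part (a) one looks instead at the linear strand. Because $I \subseteq \mathfrak{m}^2$, the linear-strand summands $L_i := S(-i-1)^{\beta_{i,i+1}^S(R)}$ assemble into a minimal \emph{sub}complex $L_\bullet$ of $F$ — a degree count shows that in a minimal resolution the differential cannot carry a degree-$(i+1)$ generator anywhere except to a degree-$i$ generator. Via the BGG correspondence $L_\bullet$ corresponds to a minimal linear complex of free modules over the exterior algebra $E = \bigwedge (S_1)^{*}$; Koszulness of $R$ forces this complex to be as acyclic as possible, so that its ranks are controlled by the Hilbert function of $R^{!}$, and comparing against the corresponding data for a complete intersection of $g$ quadrics — whose linear strand has ranks $\binom gi$ — yields $\beta_{i,i+1}^S(R) \le \binom gi$. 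For the equality clause at $i=2$: if $\beta_{2,3}^S(R) = \binom g2$, the linear strand is already ``full'' in homological degree $2$, and using (b) to bound the complementary part of the Betti table and then inducting, one finds that $F$ must be entirely linear, necessarily of length exactly $g$ (it cannot be longer for a $g$-generated quadratic ideal, and fullness prevents it from being shorter). A linear resolution of length $g$ together with the Auslander--Buchsbaum formula and the shape of the Betti table then forces $\hgt I = 1$.

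The crux in both parts is the assertion that the extreme strands are accounted for by iterated Koszul (resp.\ ``exterior'') syzygies on the minimal generators — surjectivity of $\psi$ onto the top strand and the corresponding acyclicity of $L_\bullet$. These statements are false for general quadratic algebras, so the argument must use the Koszul hypothesis in an essential way; I expect the cleanest route is through the homotopy Lie algebra $\pi^{\ge 2}(R)$ (closely tied to the Koszul dual $R^{!}$) rather than through direct manipulation of syzygy matrices. Once that structural input is in hand, the two inequalities and the propagation arguments in the equality clauses should reduce to bookkeeping.
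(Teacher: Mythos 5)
First, a point of reference: the paper does not prove this proposition---it imports it verbatim from \cite[3.4, 4.2]{Koszul:algebras:defined:by:3:quadrics}---so your sketch has to be measured against that source rather than against anything internal to the paper. For the inequality in (b) your outline is essentially the standard one and is sound: the surjectivity of $\bigwedge^i \tor_1^S(R,\kk)_2 \to \tor_i^S(R,\kk)_{2i}$ for Koszul $R$ is exactly \cite[3.1]{free:resolutions:over:Koszul:algebras}, which this paper itself invokes in the proof of Proposition \ref{Koszul:syzygies:span}, and strict graded-commutativity of the Koszul homology algebra makes the multiplication factor through $\bigwedge^i$, giving $\beta_{i,2i}^S(R) \leq \binom{g}{i}$; the downward propagation of injectivity via nondegeneracy of the wedge pairing is also fine. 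The genuine gap is the last step of the equality clause. Writing $U = \syz_1^S(I)$ and letting $\langle W\rangle \subseteq U$ be the submodule generated by the Koszul syzygies, one has $H_1(K_\bullet(q_1,\dots,q_g);S) \tensor_S \kk \;\iso\; U/(\langle W\rangle + S_+U) \;\iso\; \tor_2^S(R,\kk)/\mathrm{im}\bigl(\bigwedge^2\kk^g\bigr)$. So what kills $H_1$ of the Koszul complex (and hence certifies a regular sequence) is \emph{surjectivity} of $\bigwedge^2\kk^g \to \tor_2^S(R,\kk)$, not the injectivity you have established; the cokernel still contains the whole linear strand $\tor_2^S(R,\kk)_3$. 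The actual content of the equality case is therefore to show that $\beta_{2,4}^S(R) = \binom{g}{2}$ forces $\beta_{2,3}^S(R) = 0$, and your ``diagram chase'' never addresses this---it is precisely where the cited proof has to work.

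Part (a) is in worse shape. The observation that the linear strand is a subcomplex of the minimal resolution is correct, but the bounding mechanism is unsubstantiated and your comparison object is wrong: a complete intersection of $g$ quadrics has $\beta_{i,i+1}^S = 0$ for all $i \geq 2$, not $\binom{g}{i}$, so ``comparing against the linear strand of a complete intersection'' cannot produce the bound. (The ideals that actually achieve $\beta_{i,i+1}^S = \binom{g}{i}$ are height-one ideals such as $x\cdot(x_1,\dots,x_g)$, whose resolution is a shifted Koszul complex of length $g$---which is exactly what the equality clause of (a) asserts.) ``Koszulness forces the BGG complex to be as acyclic as possible, so its ranks are controlled by the Hilbert function of $R^!$'' is a slogan rather than an argument: you would need to identify which exactness property of the linear strand follows from the Koszul hypothesis and why it caps the ranks at $\binom{g}{i}$, and the subsequent deduction of $\hgt I = 1$ depends on Betti data you have not established. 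As it stands, neither the inequality in (a) nor either equality clause is proved; these steps need to be supplied from, or cited to, \cite{Koszul:algebras:defined:by:3:quadrics} as the paper does.
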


\begin{cor} \label{nonCI:Koszul:algebras:have:linear:syzygies}
If $R = S/I$ is a Koszul algebra which is not a complete intersection, then $I$ has a linear syzygy.
\end{cor}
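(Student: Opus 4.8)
The plan is to prove the contrapositive: if $R = S/I$ is Koszul and $I$ has no linear syzygy, i.e.\ $\beta_{2,3}^S(R) = 0$, then $I$ is a complete intersection. Since $R$ is Koszul, $I$ is generated by quadrics; set $g = \beta_1^S(R)$. The cases $g \leq 1$ are trivial, $I$ being zero or principal and hence a complete intersection, so I may assume $g \geq 2$ --- which is exactly what is needed to invoke Proposition~\ref{Koszul:Betti:table:constraints}(b) with $i = 2$.

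First I would fix a minimal set of quadric generators $q_1, \dots, q_g$ of $I$ and let $Z \subseteq S^g$ be the corresponding module of first syzygies (so $e_\ell \mapsto q_\ell$), graded compatibly with the minimal free resolution of $R$ so that $\beta_{2,j}^S(R)$ counts the minimal generators of $Z$ in degree $j$. Because $I$ is generated in degree $2$, the module $Z$ is concentrated in degrees $\geq 3$; the hypothesis $\beta_{2,3}^S(R) = 0$ then forces $Z_3 = 0$, so $Z$ is generated in degrees $\geq 4$. Consequently every element of $Z_4$ is a minimal generator, and $\beta_{2,4}^S(R) = \dim_\kk Z_4$.

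Next I would produce enough elements of $Z_4$. The Koszul relations $\sigma_{ij} = q_i e_j - q_j e_i$ for $1 \leq i < j \leq g$ are $\binom{g}{2}$ syzygies of degree $4$, and they are $\kk$-linearly independent: examining the coefficient of each basis vector $e_\ell$ in a vanishing $\kk$-linear combination $\sum_{i<j} c_{ij}\sigma_{ij} = 0$ yields a $\kk$-linear dependence among $q_1, \dots, q_g$, which is impossible since these are minimal generators. Hence $\beta_{2,4}^S(R) = \dim_\kk Z_4 \geq \binom{g}{2}$. Combining this with the upper bound $\beta_{2,4}^S(R) \leq \binom{g}{2}$ of Proposition~\ref{Koszul:Betti:table:constraints}(b) forces equality, and the equality clause of that proposition then gives that $I$ is a complete intersection, contradicting the hypothesis and completing the proof.

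I do not anticipate a genuine obstacle here; the only delicate point is the identification $\beta_{2,4}^S(R) = \dim_\kk Z_4$, which rests precisely on the vanishing $Z_3 = 0$ supplied by the absence of linear syzygies (without it, the Koszul syzygies can fail to be minimal, as already happens for two quadrics with a common linear factor), together with first disposing of the degenerate cases $g \leq 1$ so that Proposition~\ref{Koszul:Betti:table:constraints}(b) is applicable.
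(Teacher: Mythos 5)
Your proof is correct and takes essentially the same route as the paper's: both arguments rest on the $\kk$-linear independence of the $\binom{g}{2}$ Koszul syzygies together with Proposition~\ref{Koszul:Betti:table:constraints}(b) and its equality clause, the only difference being that you phrase it contrapositively (no linear syzygy $\Rightarrow$ all Koszul syzygies minimal $\Rightarrow$ complete intersection) while the paper argues directly that some combination of Koszul syzygies must be non-minimal and hence an $S$-multiple of a linear syzygy.
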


\begin{proof}
Suppose that $I$ is minimally generated by $q_1, \dots, q_g$.  If the Koszul syzygies on the $q_i$ are all minimal generators of $\syz_1^S(I)$, then $\beta_{2,4}^S(R) \geq \binom{g}{2}$ contradicting the preceding proposition.  Hence, some $\kk$-linear combination of the Koszul syzygies is not minimal, and therefore, it is an $S$-linear combination of linear syzygies.
\end{proof}

Lastly, we will need a fact about the syzygies of a Koszul algebra which will be clear to experts, but for completeness, we give a quick proof.  The proof relies on the product structure on $\tor_*^S(R, \kk)$.  We briefly recall how this product is defined and refer the reader to \cite{infinite:free:resolutions} for further details.  To simplify notation, all tensor products below are over $S$.  

If $F_\bullet$ denotes the minimal free resolution of $R$ over $S$, we have the K\"unneth map
\[ \tor_i^S(R,\kk) \tensor \tor_j^S(R,\kk) = H_i(F_\bullet \tensor \kk) \tensor H_j(F_\bullet \tensor \kk) \stackrel{\kappa}{\longto}  H_{i+j}((F_\bullet \tensor \kk)  \tensor (F_\bullet \tensor \kk)) \]
sending $\cls(v) \tensor \cls(w) \mapsto \cls(v \tensor w)$. Denoting by $\mu^\kk: \kk \tensor \kk \to \kk$ and $\mu^R: R \tensor R \to R$ the respective product maps, we have a chain map $\mu^F: F_\bullet \tensor F_\bullet \to F_\bullet$ lifting $\mu^R$.  The product structure on $\tor_*^S(R, \kk)$ is the composition of the K\"unneth map with the map induced on homology by the chain map
\[ F_\bullet \tensor \kk \tensor F_\bullet \tensor \kk \iso F_\bullet \tensor F_\bullet \tensor \kk \tensor \kk \stackrel{\mu^F \tensor \mu^\kk}{\longto} F_\bullet \tensor \kk \; .\]

\begin{prop} \label{Koszul:syzygies:span}
If $R = S/I$ is a Koszul algebra, then $\syz_1^S(I)$ is minimally generated by linear syzygies and Koszul syzygies.
\end{prop}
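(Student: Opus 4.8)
The plan is to pin down the minimal syzygies degree by degree using Lemma~\ref{Koszul:algebras:have:subdiagonal:Betti:table} and the product on $\tor^S_*(R,\kk)$. Since $R$ is Koszul, $I$ is generated by quadrics, say $I=(q_1,\dots,q_g)$, so $\syz^S_1(I)\subseteq S(-2)^g$ has all its minimal generators in degrees $\ge 3$; and Lemma~\ref{Koszul:algebras:have:subdiagonal:Betti:table} gives $\beta^S_{2,j}(R)=0$ for $j\ge 5$. Hence every minimal generator of $\syz^S_1(I)$ has degree $3$ or $4$. A degree-$3$ element of $\syz^S_1(I)$ is visibly a linear syzygy, so the statement reduces to showing that $\syz^S_1(I)_4 = S_1\cdot\syz^S_1(I)_3 + \Span_\kk\{\,q_ie_j-q_je_i : i<j\,\}$, i.e.\ that $\tor^S_2(R,\kk)_4$ is spanned by the images of the Koszul syzygies.

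To identify those images I would compute the product. Choose the chain map $\mu^F\colon F_\bullet\tensor F_\bullet\to F_\bullet$ lifting $\mu^R$ with $\mu^F_0=\mathrm{id}_S$ and $\mu^F_1(e_i\tensor 1)=\mu^F_1(1\tensor e_i)=e_i$; then the chain-map condition in homological degree $2$ forces $\dd_2\bigl(\mu^F_2(e_i\tensor e_j)\bigr)=q_ie_j-q_je_i$. Under the isomorphism $\tor^S_2(R,\kk)=F_2\tensor\kk\iso\syz^S_1(I)/S_+\syz^S_1(I)$ induced by $\dd_2$, it follows that the product in $\tor^S_*(R,\kk)$ of the classes of $e_i$ and $e_j$ is the class of the Koszul syzygy $q_ie_j-q_je_i$. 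Since $\tor^S_1(R,\kk)$ is concentrated in degree $2$, the multiplication $\tor^S_1(R,\kk)\tensor\tor^S_1(R,\kk)\to\tor^S_2(R,\kk)$ takes values in degree $4$, and by the previous sentence its image coincides with the span in $\tor^S_2(R,\kk)_4$ of the classes of the Koszul syzygies. So it remains to show that this multiplication is onto $\tor^S_2(R,\kk)_4$.

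For the surjectivity, identify $\tor^S_*(R,\kk)$ with $\tor^S_*(\kk,R)$ as usual and use the multiplicative change-of-rings spectral sequence $E^2_{p,q}=\tor^R_p\bigl(\kk,\tor^S_q(\kk,R)\bigr)\Rightarrow\tor^S_{p+q}(\kk,\kk)$, whose edge row $E^2_{0,\bullet}=\tor^S_\bullet(\kk,R)$ is a subalgebra carrying the above multiplication and in which each $d_r$ is a derivation. Its five-term exact sequence, together with the nondegeneracy of $I$ (so that $\tor^S_1(\kk,\kk)\to\tor^R_1(\kk,\kk)$ is an isomorphism), shows $d_2\colon\tor^R_2(\kk,\kk)\to\tor^S_1(\kk,R)$ is surjective; since $d_2$ annihilates $E^2_{0,1}$, the derivation property gives $\mathrm{im}\bigl(d_2\colon E^2_{2,1}\to E^2_{0,2}\bigr)=\tor^S_1(\kk,R)\cdot\tor^S_1(\kk,R)$, the decomposable part of $\tor^S_2(\kk,R)$. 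Because $R$ is Koszul, $E^2_{3,0}=\tor^R_3(\kk,\kk)$ is concentrated in internal degree $3$, so in internal degree $4$ no $d_3$ reaches $E^3_{0,2}$; and $E^\infty_{0,2}$ vanishes there since the abutment $\tor^S_2(\kk,\kk)=\textstyle\bigwedge^2 S_1$ lives in internal degree $2$. Therefore $d_2$ is onto $E^2_{0,2}$ in internal degree $4$, i.e.\ $\tor^S_2(\kk,R)_4$ is decomposable, which completes the proof.

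The main obstacle is this last step: constructing the change-of-rings spectral sequence with its multiplicative structure, verifying that its $d_2$ is compatible with the product on $\tor^S_*(\kk,R)$ used to recognize the Koszul syzygies, and carrying out the internal-degree bookkeeping that makes the Koszul hypothesis actually intervene. By comparison, the chain-level computation identifying products with Koszul syzygies is routine, and the reduction to degree $4$ is immediate from Lemma~\ref{Koszul:algebras:have:subdiagonal:Betti:table}.
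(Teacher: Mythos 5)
Your argument is correct and reaches the same two-step structure as the paper's proof: reduce to showing $\tor_2^S(R,\kk)_4$ is spanned by classes of Koszul syzygies, and identify those classes as the products of elements of $\tor_1^S(R,\kk)_2$ via a chain map $\mu^F$ lifting the multiplication on $R$ (your computation $\dd_2(\mu^F(e_i\tensor e_j)) = q_ie_j - q_je_i$ is exactly the one in the paper). Where you genuinely diverge is in the decomposability step: the paper simply cites \cite[3.1]{free:resolutions:over:Koszul:algebras}, which states that $\tor_2^S(R,\kk)_4 = (\tor_1^S(R,\kk)_2)^2$ for Koszul $R$, whereas you re-derive the needed case of that theorem from the multiplicative change-of-rings spectral sequence $\tor^R_p(\kk,\tor^S_q(\kk,R)) \Rightarrow \tor^S_{p+q}(\kk,\kk)$. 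Your internal-degree bookkeeping there checks out: the five-term sequence plus nondegeneracy of $I$ gives surjectivity of $d_2\colon \tor^R_2(\kk,\kk)\to\tor^S_1(\kk,R)$, the derivation property converts this into surjectivity of $d_2\colon E^2_{2,1}\to E^2_{0,2}$ onto the decomposables, Koszulness puts $E^2_{3,0}$ in internal degree $3$ so that $d_3$ cannot hit internal degree $4$, and the abutment kills $E^\infty_{0,2}$ in degree $4$. What this buys is a self-contained proof of the only piece of \cite[3.1]{free:resolutions:over:Koszul:algebras} actually used; what it costs is exactly the obstacle you name, namely the construction of the multiplicative structure on this spectral sequence, the identification of $E^2$ with the tensor product algebra $\tor^R_*(\kk,\kk)\tensor_\kk\tor^S_*(\kk,R)$ (needed for $E^2_{2,1}=E^2_{2,0}\cdot E^2_{0,1}$), and the compatibility of the product on the edge row with the one defined via $\mu^F$. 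These are standard but not short to verify from scratch, which is presumably why the paper outsources them to the citation; as written, your proof is correct modulo those standard facts.
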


\begin{proof}
By Lemma \ref{Koszul:algebras:have:subdiagonal:Betti:table}, we know that $\syz_1^S(I)$ is minimally generated by linear and quadratic syzygies.  We may assume that $\beta_{2,4}^S(R) \neq 0$ or else the conclusion holds trivially.  In that case, it follows from \cite[3.1]{free:resolutions:over:Koszul:algebras} that $\tor_2^S(R,\kk)_4 = (\tor_1^S(R,\kk)_2)^2$, so it suffices to note that the products of the generators of $\tor_1^S(R, \kk)_2$ correspond to the Koszul syzygies on a minimal set of generators $q_1,\dots, q_g$ for $I$.  If $e_1, \dots, e_g$ denotes the standard basis of $S(-2)^g = F_1$ such that $\dd(e_i) = q_i$ for each $i$, we can choose $\mu^F$ so that $\mu^F(e_i \tensor 1) = \mu^F(1 \tensor e_i) = e_i$ for all $i$.  Since the $e_i \tensor 1$ span $\tor_1^S(R, \kk)_2$, it follows that the $\mu^F(e_i \tensor e_j) \tensor 1$ span $\tor_2^S(R, \kk)_4$.  As $\mu^F$ is a chain map, we see that
\[ \dd(\mu^F(e_i \tensor e_j)) = \mu^F(\dd(e_i \tensor e_j)) = \mu^F(q_i \tensor e_j - e_i \tensor q_j) = q_ie_j - q_je_i \]
so that $\mu^F(e_i \tensor e_j)$ corresponds to a Koszul syzygy in $\syz_1^S(I)$ as wanted.  Hence, the Koszul syzygies together with multiples of the linear syzygies must span $\syz_1^S(I)_4$, and the proposition easily follows.
\end{proof}

\section{Koszul Almost Complete Intersections}
\label{Koszul:ACI's}

Recall that a standard graded $\kk$-algebra $R = S/I$ with $\hgt I = g$ is called an \emph{almost complete intersection} (or ACI for short) if $I$ is minimally generated by $g+1$ elements.

\begin{thm} \label{Koszul:algebras:with:one:linear:syzygy}
Let $R = S/I$ be a Koszul algebra with $\beta_{2,3}^S(R) = 1$.  Then there are independent linear forms $x$ and $w$ and a linear form $z$ such that $I = (xz, zw, q_3, \dots, q_{g+1})$ for some regular sequence of quadrics $q_3, \dots, q_{g+1}$ on $S/(xz,zw)$, and conversely, every ideal of this form defines a Koszul algebra with $\beta_{2,3}^S(R) = 1$.  Hence, $R$ is an almost complete intersection with $e(R) = 2^{g-1}$ and Betti table 
\textnormal{
\[
\begin{tabular}{c|cccccccc} 
  & 0 & 1 & 2 & 3 & $\cdots$ & $g-1$ & $g$ & $g+1$ \\ 
\hline 
0 & 1 & -- & -- & -- & & -- & -- & -- \\ 
1 & -- & $g+1$ & 1 & -- & & -- & -- & --  \\
2 & -- & -- & $\frac{g+2}{2}\binom{g-1}{1}$ & $\binom{g-1}{1}$ & & -- & -- & --  \\
$\vdots$ & & & & $\ddots$ & $\ddots$ \\
$g-1$ & -- & -- & -- & & & $\frac{2g-1}{g-1}\binom{g-1}{g-2}$ & $\binom{g-1}{g-2}$ & -- \\
$g$ & -- & -- & -- & & & -- & 2 & 1
\end{tabular}
\]}
Specifically, we have $\beta_{i,2i}^S(R) = \frac{g+i}{i}\binom{g-1}{i-1}$ and $\beta_{i,2i-1}^S(R) = \binom{g-1}{i-2}$ for $i \geq 2$ so that $\beta_i^S(R) = \binom{g+1}{i}$ for all $i$.
\end{thm}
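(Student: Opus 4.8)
The plan is to split the statement into the two converse directions and the Betti number formulas, handling the harder "necessity" direction first and then the easier "sufficiency" direction.

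For the necessity direction, I start from the hypothesis $\beta_{2,3}^S(R) = 1$, i.e., $I$ has a unique (up to scalar) linear syzygy. I would first apply Proposition \ref{Koszul:syzygies:span} to conclude that $\syz_1^S(I)$ is generated by this one linear syzygy together with the Koszul syzygies. The single linear syzygy gives a relation $a q_i = b q_j$ among two of the minimal generators with $a, b$ linear; I would argue that after a change of coordinates this forces two of the quadrics to be $xz$ and $zw$ for linear forms $x, z, w$ with $x, w$ independent (the linear syzygy $w \cdot xz - x \cdot zw = 0$ being the syzygy in question, and uniqueness forcing essentially this shape — one must rule out the degenerate possibilities where $z$ divides both quadrics trivially or where $x, w$ are dependent, which would create extra linear syzygies or drop the generator count). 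Then I must show the remaining generators $q_3, \dots, q_{g+1}$ form a regular sequence on $S/(xz, zw)$: since $\hgt I = g$ (as $R$ is an ACI, which itself needs to be deduced — e.g. from Corollary \ref{nonCI:Koszul:algebras:have:linear:syzygies} ruling out the CI case together with a height count forcing $\hgt I = g$ when $\beta_{2,3} = 1$, perhaps via Proposition \ref{Koszul:Betti:table:constraints}), and $\hgt(xz, zw) = 2$, the extra $g - 1$ quadrics must cut down the dimension by exactly $g - 1$ steps, and over the (non-Cohen–Macaulay but equidimensional enough) ring $S/(xz,zw)$ one checks this is a regular sequence. I expect the main obstacle to be precisely the rigidity argument that $\beta_{2,3}^S(R) = 1$ forces exactly the configuration $(xz, zw)$ and not some other pair admitting a unique linear syzygy — this requires carefully enumerating how linear syzygies among quadrics arise (common linear factor, or a $2 \times 2$ scalar relation after factoring) and showing all alternatives produce either zero or at least two independent linear syzygies.

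For the sufficiency direction, given $I = (xz, zw, q_3, \dots, q_{g+1})$ of the stated form, I would compute a minimal free resolution explicitly. The key structural fact is that $S/(xz, zw)$ has a well-understood resolution (it is the edge ideal of a path on three vertices, resolved by a short Taylor-type complex), and modding out by the regular sequence $q_3, \dots, q_{g+1}$ tensors this resolution with a Koszul complex on $g - 1$ elements. I would build the resolution of $R$ as the total complex of this tensor product and verify minimality (all entries in the differential lie in the maximal ideal, which holds since the $q_i$ are quadrics and the maps in the resolution of $S/(xz,zw)$ have linear or quadratic entries). Koszulness then follows either from Proposition \ref{passing:Koszulness:to:and:from:quotients}(a) — showing $\reg_S R \le 1$ is false here since there is a row-2 strand, so instead I would show directly that $S/(xz,zw)$ is Koszul (it is a quadratic monomial ideal, hence Koszul by Fröberg) and that quotienting by a regular sequence of quadrics preserves Koszulness via the LG-quadratic framework and the fact noted in the excerpt that quadratic complete intersections are LG-quadratic — more precisely, $R$ is a quotient of a G-quadratic algebra, giving $R$ LG-quadratic and hence Koszul.

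Finally, for the numerology: from the explicit resolution as the total complex of (resolution of $S/(xz,zw)$) $\otimes$ (Koszul complex on $g-1$ quadrics), the graded Betti numbers are convolutions of the two factors' Betti numbers. The resolution of $S/(xz,zw)$ contributes $\beta_{0,0} = 1$, $\beta_{1,2} = 2$, $\beta_{2,3} = 1$, and the Koszul complex contributes $\binom{g-1}{p}$ in homological degree $p$ and internal degree $2p$. Convolving, $\beta_{i, 2i}^S(R) = \binom{g-1}{i} \cdot 1 + \binom{g-1}{i-1} \cdot 2$ and $\beta_{i, 2i-1}^S(R) = \binom{g-1}{i-2} \cdot 1$; one then checks the arithmetic identity $\binom{g-1}{i} + 2\binom{g-1}{i-1} = \frac{g+i}{i}\binom{g-1}{i-1}$, and summing $\beta_{i,2i} + \beta_{i,2i-1}$ and applying Pascal's rule twice yields $\binom{g+1}{i}$. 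The multiplicity $e(R) = 2^{g-1}$ follows from $e(S/(xz,zw)) = 2$ (degree of a union of two codimension-2 linear-ish pieces — more carefully, from the $h$-polynomial $(1 + t)^{?}$ computation) multiplied by $2^{g-1}$ for the $g-1$ quadric hypersurface sections, or directly by plugging the Betti table into the standard alternating-sum formula for multiplicity. I do not expect these calculations to present real difficulty once the resolution is in hand; they are routine binomial manipulations.
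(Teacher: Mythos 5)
The heart of this theorem is the necessity direction, and that is precisely where your proposal has a gap. A linear syzygy on quadrics is a relation $\sum_{i=1}^{g+1}\ell_i q_i = 0$ with the $\ell_i$ linear; it need not involve only two of the generators (the linear syzygies on the $2\times 2$ minors of a $3\times 2$ matrix of linear forms involve all three minors), so your starting point ``$aq_i=bq_j$'' is unjustified, and ``enumerating how linear syzygies among quadrics arise'' is not a workable substitute. The paper's mechanism is different and uses Koszulness at exactly this point: since $I$ has a linear syzygy it is not a complete intersection, so by Proposition \ref{Koszul:Betti:table:constraints}(b) the $\binom{g+1}{2}$ Koszul syzygies cannot all be minimal generators of $U=\syz_1^S(I)$; as the only possible lower-degree generator of $U$ is the unique linear syzygy $\ell$, some nonzero multiple $z\ell$ ($z$ a linear form) must lie in the span of the Koszul syzygies. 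Writing $z\ell=\sum a_{i,j}(q_je_i-q_ie_j)$ with $a_{1,2}=1$ and reading off the first two coordinates exhibits $z\ell_1$ and $z\ell_2$ as replacements for $q_2$ and $-q_1$; these are the desired $zw$ and $xz$. Without this step you never produce the common factor $z$.

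Your argument for the regular-sequence claim also fails: $S/(xz,zw)$ is not Cohen--Macaulay (it is not even equidimensional, having components $V(z)$ and $V(x,w)$ of codimensions $1$ and $2$), so cutting the dimension down by $g-1$ steps does not imply $q_3,\dots,q_{g+1}$ is a regular sequence there; moreover $\hgt I=g$ and the ACI property are conclusions of the theorem, not available inputs. The paper instead shows that every Koszul syzygy other than $(q_2,-q_1,0,\dots,0)$ is a minimal generator of $U$, so that $U$ is minimally generated by those Koszul syzygies together with $\ell$; this forces $((xz,zw,q_3,\dots,q_g):q_{g+1})=(xz,zw,q_3,\dots,q_g)$, and induction on $g$ (the intermediate quotient is again Koszul with a single linear syzygy, by a mapping-cone argument) yields the regular sequence. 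The sufficiency direction and the Betti-number convolution are fine in outline, though the paper gets Koszulness by iterating Proposition \ref{passing:Koszulness:to:and:from:quotients}(a) over the intermediate quotients rather than via LG-quadraticity, which in the paper is \emph{deduced from} this theorem and would risk circularity. Finally, note $e(S/(xz,zw))=1$, not $2$: the ideal $(xz,zw)=z\cdot(x,w)$ has height one and only the component $V(z)$ contributes, which is how one lands on $e(R)=2^{g-1}$ rather than $2^{g}$.
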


\begin{proof}
Since $I$ has a linear syzygy, it is not a complete intersection.  In particular, we can write $I = (q_1, \dots, q_{g+1})$ for some linear independent quadrics $q_i$ with $g \geq 1$.  Let $U = \syz_1^S(I)$, $W \subseteq U_4$ denote the $\kk$-span of the Koszul syzygies on the $q_i$, and $\ell \in U$ denote the unique linear syzygy up to scalar multiple.  If $W \cap S_+U = 0$, then $\beta_{2,4}^S(R) \geq \binom{g+1}{2}$ so that Proposition \ref{Koszul:Betti:table:constraints} implies $I$ is a complete intersection, which is a contradiction.  Hence, there is a linear form $z$ such that $z\ell \in W$ is nonzero.  Write $z\ell = \sum_{1 \leq i < j \leq g+1} a_{i,j}(q_je_i - q_ie_j)$ for some $a_{i,j} \in \kk$, where $e_1, \dots, e_{g+1}$ denotes the standard basis of $S(-2)^{g+1}$.  After suitably relabeling the $q_i$ and rescaling the equality, we may assume that $a_{1,2} = 1$.  Reading off the first two coordinates of the preceding equality then gives $z\ell_1 = q_2 + \sum_{j = 3}^{g+1} a_{1,j}q_j$ and $z\ell_2 = -q_1 + \sum_{j = 3}^{g+1} a_{2,j}q_j$.  Using these equalities, we can replace $q_1$ and $q_2$ with $z\ell_2$ and $z\ell_1$ as generators of $I$ and assume that $q_1 = xz$ and $q_2 = zw$ for some linear forms $x$, $z$, and $w$.  Note that $x$ and $w$ must be independent since the $q_i$ are.

After making this change, we have $\ell = (w, -x, 0, \dots, 0)$ is the unique linear syzygy on the $q_i$, and $(q_2, -q_1, 0, \dots, 0) = z\ell$.  Let $W' \subseteq U_4$ denote the $\kk$-span of the Koszul syzygies other than $(q_2, -q_1, 0, \dots, 0)$.  If $W' \cap S_+U \neq 0$, then there is a linear form $v$ such that $v\ell \in W'$ is nonzero.  Write 
\[ v\ell = \sum_{\substack{1 \leq i < j \leq g+1 \\ j \geq 3}} b_{i,j}(q_je_i - q_ie_j) \] 
for some $b_{i,j} \in \kk$.  Since $b_{i,j} \neq 0$ for some $j \geq 3$, reading off the $j$-th coordinate of the above equality yields a linear dependence relation on the $q_i$, which is a contradiction.  Hence, we must have $W' \cap S_+U = 0$ so that all of the Koszul syzygies except $(q_2, -q_1, 0, \dots, 0)$ are part of a minimal set of generators for $U$.  Because $\beta_{2,4}^S(R) \leq \binom{g+1}{2} -1$ and $\beta_{2,j}^R(S) = 0$ for $j > 4$ by Lemma \ref{Koszul:algebras:have:subdiagonal:Betti:table}, it follows that $U = \syz_1^S(I)$ is minimally generated by all the Koszul syzygies on the $q_i$, except $(q_2, -q_1, 0, \dots, 0)$, together with the linear syzygy $\ell$.

If $f_{g+1} \in ((xz, zw, q_3, \dots, q_g): q_{g+1})$, then we can write $f_{g+1}q_{g+1} = -\sum_{i=1}^g f_iq_i$ for some $f_i \in S$ so that $(f_1, \dots, f_{g+1}) \in U$.  It follows from the preceding paragraph that $f_{g+1} \in (xz,zw, q_3, \dots, q_g)$ so that $q_{g+1}$ is regular on $R' = S/(xz,zw,q_3, \dots, q_g)$.  As $\reg_{R'} R = 1$, it follows from Corollary \ref{passing:Koszulness:to:and:from:quotients} that $R'$ is also Koszul.  Moreover, because we can obtain the resolution of $R$ over $S$ by taking the mapping cone of multiplication by $q_{g+1}$ on the resolution of $R'$ over $S$, it follows that $\beta^S_{2,3}(R') = 1$ and that $(w,-x,0,\dots,0)$ is the unique linear syzygy on $xz, zw, q_3, \dots, q_g$.  Hence, induction on $g$ implies that $q_3, \dots, q_{g+1}$ is a regular sequence on $S/(xz,zw)$.  Conversely, if $I = (xz, zw, q_3, \dots, q_{g+1})$ for some regular sequence of quadrics $q_3, \dots, q_{g+1}$ on $S/(xz,zw)$, it also follows from Corollary \ref{passing:Koszulness:to:and:from:quotients} that $R = S/I$ is Koszul since $S/(xz,zw)$ is Koszul.

From the preceding paragraph, we see that $\hgt I = \hgt (xz, zw) + g -1 = g$ so that $I$ is an almost complete intersection.  If $F_\bullet$ denotes the minimal free resolution of $S/(xz,zw)$ over $S$, we obtain the minimal resolution of $R$ by repeatedly taking the mapping cone of multiplication by $q_{i+1}$ on the resolution of $S/(xz,zw,q_3, \dots, q_i)$.  Since taking the mapping cone of multiplication by $q_{i+1}$ is the same as tensoring with the Koszul complex on $q_{i+1}$, we see that $F_\bullet \tensor_S K_\bullet(q_3, \dots, q_{g+1})$ is the minimal free resolution of $R$ over $S$, from which the Betti table is easily deduced.  In particular, we have
\[ \beta_i^S(R) = \frac{g+i}{i}\binom{g-1}{i-1} + \binom{g-1}{i-2}  = \binom{g}{i} + \binom{g}{i-1} = \binom{g+1}{i} \;. \]
Similarly, we note that the multiplicity of $S/(xz,zw,q_3, \dots, q_{i+1})$ is twice the multiplicity of $S/(xz,zw,q_3, \dots, q_i)$, and so, since $e(S/(xz,zw)) = 1$, we see that $e(R) = 2^{g-1}$.
\end{proof}

\begin{rmk}
In the statement of the above theorem, we can choose $x$ and $w$ so that $zw, q_3, \dots, q_{g+1}$ is a maximal $S$-regular sequence contained in $I$.  Indeed, since $q_3, \dots, q_{g+1}$ is a regular sequence on $S/(xz,zw)$, we know that $q_3, \dots, q_{g+1}$ is a regular sequence on $S$ by Auslander's Zerodivisor Theorem, which is a consequence of the Peskine-Szpiro Intersection Theorem for arbitrary Noetherian local rings and follows from results of Serre in the regular case; see \cite[II.0]{Peskine:Szpiro}.  Each associated prime of $(q_3, \dots, q_{g+1})$ cannot contain both $xz$ and $zw$, otherwise we would have $\hgt I \leq g-1$ since the former ideal is unmixed.  If either of $xz$ or $zw$ is not contained in every associated prime of $(q_3, \dots, q_{g+1})$, we are done after possibly switching the roles of $x$ and $w$.  Otherwise, if $xz$ and $zw$ are both contained in different associated primes of $(q_3, \dots, q_{g+1})$, then we can replace $w$ with $w+x$.
\end{rmk}

\begin{thm} \label{Koszul:ACI's:with:two:linear:syzygies}
Let $R = S/I$ be a Koszul almost complete intersection with $\beta_{2,3}^S(R) = 2$.  Then there is a $3 \times 2$ matrix of linear forms $M$ with $\hgt I_2(M) = 2$ such that $I = I_2(M) + (q_4, \dots, q_{g+1})$ for some regular sequence of quadrics $q_4, \dots, q_{g+1}$ on $S/I_2(M)$, and conversely, every ideal of this form defines a Koszul almost complete intersection with $\beta_{2,3}^S(R) = 2$.  Hence, $R$ has multiplicity $e(R) = 3\cdot 2^{g-2}$ and Betti table 
\textnormal{
\[
\begin{tabular}{c|cccccccc} 
  & 0 & 1 & 2 & 3 & $\cdots$ & $g-2$ & $g-1$ & $g$ \\ 
\hline 
0 & 1 & -- & -- & -- & & -- & -- & -- \\ 
1 & -- & $g+1$ & 2 & -- & & -- & -- & --  \\
2 & -- & -- & $3\binom{g-2}{1}+\binom{g-2}{2}$ & $2\binom{g-2}{1}$ & & -- & -- & --  \\
$\vdots$ & & & & $\ddots$ & $\ddots$ \\
$g-2$ & -- & -- & -- & & & $3\binom{g-2}{g-3} + 1$ & $2\binom{g-2}{g-3}$ & -- \\
$g-1$ & -- & -- & -- & & & -- & 3 & 2
\end{tabular}
\]}
Specifically, we have $\beta_{i,2i}^S(R) = 3\binom{g-2}{i-1}+ \binom{g-2}{i}$ and $\beta_{i,2i-1}^S(R) = 2\binom{g-2}{i-2}$ for $i \geq 2$ so that $\beta_i^S(R) \leq \binom{g+1}{i}$ for all $i$.
\end{thm}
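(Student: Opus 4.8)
The plan is to mirror the proof of Theorem~\ref{Koszul:algebras:with:one:linear:syzygy}, replacing its single straightened linear syzygy by a pair of linear syzygies that assemble into a Hilbert--Burch matrix. Since $R$ is not a complete intersection (Corollary~\ref{nonCI:Koszul:algebras:have:linear:syzygies}), write $I = (q_1,\dots,q_{g+1})$ with the $q_i$ linearly independent quadrics, put $U = \syz_1^S(I)$, let $W \subseteq U_4$ be the $\kk$-span of the Koszul syzygies on the $q_i$, and note $\dim_\kk U_3 = \beta_{2,3}^S(R) = 2$, spanned by two linear syzygies $\ell^{(1)},\ell^{(2)}$; by Proposition~\ref{Koszul:syzygies:span} the module $U$ is generated by $\ell^{(1)},\ell^{(2)}$ and the Koszul syzygies. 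As in Theorem~\ref{Koszul:algebras:with:one:linear:syzygy}, since $R$ is not a complete intersection, Proposition~\ref{Koszul:Betti:table:constraints}(b) forces $W\cap S_+U\neq 0$, so some $\kk$-combination $\sum_i v_i\ell^{(i)}$ with $v_i\in S_1$ equals a combination $\sum_{p<r}a_{pr}(q_re_p-q_pe_r)$; reading off two coordinates (after relabeling so the relevant $a_{pr}=1$) lets one replace two of the $q_i$ by quadrics extracted from that equation, exactly as in that proof.

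\emph{Step 1: normal form for the two linear syzygies.} The principal task is to iterate such replacements until $\ell^{(1)}$ and $\ell^{(2)}$ have zero components on $e_4,\dots,e_{g+1}$, so that they become the two columns of a $3\times 2$ matrix of linear forms $M$ with $M^{\mathsf T}(q_1,q_2,q_3)^{\mathsf T}=0$. The mechanism, as in Theorem~\ref{Koszul:algebras:with:one:linear:syzygy}, is that a linear syzygy whose support meets $\{e_4,\dots,e_{g+1}\}$ cannot be written in the form $\sum a_{pr}(q_re_p-q_pe_r)$ with every index $r$ appearing in the sum lying in $\{1,2,3\}$, since reading off a coordinate $r\ge 4$ would produce a $\kk$-linear dependence among the $q_i$. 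I expect this straightening to be the main obstacle of the proof: in contrast with the single linear syzygy case, which concludes after one replacement, here the reduction must be carried out for both linear syzygies simultaneously, with careful control of how each replacement perturbs the components of the syzygy already in normal form, and the Hilbert--Burch structure can only be recognized once the joint support has been cut down to three of the generators.

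\emph{Step 2: peeling off regular quadrics and Hilbert--Burch.} Once the normal form holds, for each $j$ with $4\le j\le g+1$ every minimal generator of $\syz_1^S(q_1,\dots,q_j)$ --- which by Proposition~\ref{Koszul:syzygies:span} is either a linear syzygy, supported on $\{e_1,e_2,e_3\}$, or a Koszul syzygy --- has $j$-th coordinate in $(q_1,\dots,q_{j-1})$, whence $\bigl((q_1,\dots,q_{j-1}):q_j\bigr) = (q_1,\dots,q_{j-1})$ and $q_j$ is a nonzerodivisor on $S/(q_1,\dots,q_{j-1})$. Using Proposition~\ref{passing:Koszulness:to:and:from:quotients} descending in $j$ to keep each quotient Koszul (the regularity of each over its predecessor being at most $1$) so that Proposition~\ref{Koszul:syzygies:span} keeps applying, one gets that $q_4,\dots,q_{g+1}$ is a regular sequence of quadrics on $S/(q_1,q_2,q_3)$, so $\hgt(q_1,q_2,q_3) = \hgt I - (g-2) = 2$. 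Then $\hgt I_2(M) = 2$ as well (otherwise $\hgt I_2(M)\le 1$ would force the $q_i$ to share a polynomial factor, contradicting $\hgt(q_1,q_2,q_3)=2$; the case $\hgt I_2(M)=0$ is excluded by the independence of $\ell^{(1)},\ell^{(2)}$), so the Hilbert--Burch complex $0\to S(-3)^2\xrightarrow{M}S(-2)^3\to S$ is exact, the kernel of $M^{\mathsf T}$ is free of rank one spanned by the signed maximal minors, and Cramer's rule identifies $(q_1,q_2,q_3)$ with $I_2(M)$. This yields $I = I_2(M) + (q_4,\dots,q_{g+1})$ as claimed.

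\emph{Step 3: converse and numerics.} For $I = I_2(M)+(q_4,\dots,q_{g+1})$ as in the statement, the Hilbert--Burch resolution shows $\reg_S(S/I_2(M)) = 1$, so $S/I_2(M)$ is Koszul by Proposition~\ref{passing:Koszulness:to:and:from:quotients}, and adjoining the $q_i$ one at a time --- each stage being a quotient by a single regular quadric, hence of regularity at most $1$ over the previous ring --- shows $R$ is Koszul. Because $q_4,\dots,q_{g+1}$ is a regular sequence on $S/I_2(M)$, the minimal free resolution of $R$ over $S$ is the Hilbert--Burch complex of $S/I_2(M)$ tensored with $K_\bullet(q_4,\dots,q_{g+1})$; reading off its graded pieces gives $\beta_{i,2i}^S(R) = 3\binom{g-2}{i-1}+\binom{g-2}{i}$ and $\beta_{i,2i-1}^S(R) = 2\binom{g-2}{i-2}$, hence the stated Betti table and $\beta_{2,3}^S(R)=2$, and --- using $\binom{g+1}{i}=\sum_{k=0}^{3}\binom{3}{k}\binom{g-2}{i-k}$ --- the bound $\beta_i^S(R) = \binom{g+1}{i} - \binom{g-1}{i-2}\le\binom{g+1}{i}$. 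Finally $e(S/I_2(M)) = 3$ (the degree of the codimension-two variety cut out by a $3\times 2$ determinantal ideal of linear forms), and each of the $g-2$ regular quadrics doubles the multiplicity, so $e(R) = 3\cdot 2^{g-2}$; since this resolution and multiplicity computation applies verbatim to the ring produced in Steps~1--2, it also completes the forward direction.
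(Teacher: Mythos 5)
Your overall architecture matches the paper's: find a nonminimal $S_1$-combination of the two linear syzygies lying in the span of the Koszul syzygies, reduce to a normal form where both linear syzygies are supported on $e_1,e_2,e_3$, identify $(q_1,q_2,q_3)$ with $I_2(M)$, peel off $q_4,\dots,q_{g+1}$ as a regular sequence using Propositions \ref{Koszul:syzygies:span} and \ref{passing:Koszulness:to:and:from:quotients} together with mapping cones, and read the Betti table off the Hilbert--Burch complex tensored with $K_\bullet(q_4,\dots,q_{g+1})$. Steps 2 and 3 are essentially the paper's argument, and your numerics (including $\beta_i^S(R)=\binom{g+1}{i}-\binom{g-1}{i-2}$) check out.

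The genuine gap is Step 1, which you explicitly defer (``I expect this straightening to be the main obstacle''). This is the heart of the theorem, and the mechanism you gesture at --- reading off a coordinate $r\geq 4$ to produce a linear dependence --- only rules certain combinations out; it does not produce the normal form. After arranging $z\ell+vh=(q_2,-q_1,0,\dots,0)+\sum_{3\leq i<j}a_{i,j}(q_je_i-q_ie_j)$, the paper (i) uses the \emph{almost complete intersection} hypothesis in a height argument (if some $a_{i,j}\neq 0$ with $i,j\geq 3$, then $I\subseteq(z,v,q_5,\dots,q_{g+1})$ would have height at most $g-1$) to force $z\ell+vh=(q_2,-q_1,0,\dots,0)$ exactly, and then (ii) splits into cases according to whether the linear forms $z$ and $v$ are independent. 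The independent case gives $(\ell_i,h_i)=a_i(v,-z)$ for $i\geq 3$ and a change of generators concentrating the support on $e_3$; the degenerate case $v=cz$ needs separate nonzerodivisor arguments (that $z$, and then $q_3$, are regular modulo complete intersections formed by the remaining $q_i$, again via the ACI hypothesis) to recover the third row of $M$. None of this appears in your proposal, and tellingly the ACI hypothesis --- which is exactly where this theorem differs from Theorem \ref{Koszul:algebras:with:one:linear:syzygy}, whose proof needs no such assumption --- is never invoked in your Step 1. There is also a secondary circularity in your Hilbert--Burch identification: you need $\hgt I_2(M)=2$ to know $\ker M^{\mathsf T}$ is spanned by the signed minors, but your argument for $\hgt I_2(M)=2$ presupposes a relation between $(q_1,q_2,q_3)$ and $I_2(M)$ that is only available after that identification; the paper sidesteps this by exhibiting $q_3=\ell_1h_2-\ell_2h_1$ explicitly and deducing $\hgt I_2(M)=2$ at the end from the regular sequence.
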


\begin{proof}
Since $I$ has a linear syzygy, it is not a complete intersection.  In particular, we can write $I = (q_1, \dots, q_{g+1})$ for some linear independent quadrics $q_i$ with $g \geq 1$.  In fact, we must have $g \geq 2$ since it is easily seen that a 2-generated graded ideal cannot have two independent linear syzygies.  Let $U = \syz_1^S(I)$, $W \subseteq U_4$ denote the $\kk$-span of the Koszul syzygies on the $q_i$, and $\ell, h \in U$ denote independent linear syzygies.  Arguing as in the proof of the previous theorem, we see there are linear forms $z$ and $v$ such that $z\ell + vh \in W$ is nonzero.  Write $z\ell + vh = \sum_{1 \leq i < j \leq g+1} a_{i,j}(q_je_i - q_ie_j)$ for some $a_{i,j} \in \kk$, , where $e_1, \dots, e_{g+1}$ denotes the standard basis of $S(-2)^{g+1}$.  After suitably relabeling the $q_i$ and rescaling the equality, we may assume that $a_{1,2} = 1$.  Reading off the coordinates of the preceding equality then gives 
\begin{align*}
\tilde{q_2} = z\ell_1 + vh_1 &= q_2 + \sum_{j = 3}^{g+1} a_{1,j}q_j
\\
-\tilde{q_1} = z\ell_2 + vh_2 &= -q_1 + \sum_{j = 3}^{g+1} a_{2,j}q_j
\\
z\ell_p + vh_p &= -\sum_{i < p} a_{i,p}q_i + \sum_{i > p} a_{p,i}q_i \qquad (p \geq 3) \;.
\end{align*}
Using the above equalities, we can replace $q_1$ and $q_2$ with $\tilde{q_1}$ and $\tilde{q_2}$ as generators for $I$.  As a result, we must also replace $\ell$ with $\tilde{\ell} = (\ell_1, \ell_2,\ell_3 - a_{1,3}\ell_2 + a_{2,3}\ell_1, \dots, \ell_{g+1} - a_{1,g+1}\ell_2 + a_{2,g+1}\ell_1)$ since
\[ 0 = \sum_{j = 1}^{g+1} \ell_jq_j  = \ell_1\tilde{q_1} + \ell_2\tilde{q_2} + \sum_{j = 3}^{g+1} (\ell_j - a_{1,j}\ell_2 + a_{2,j}\ell_1)q_j \;.\]
Similarly, $h$ must be replaced with the linear syzygy $\tilde{h}$ defined as above.  It is easily seen that $\tilde{\ell}$ and $\tilde{h}$ must also be independent linear syzygies.  Finally, setting $b_{i,j} = a_{i,j} + a_{1,j}a_{2,i} - a_{1,i}a_{2,j}$ for $3 \leq i < j \leq g+1$, we claim that
\[ z\tilde{\ell} + v\tilde{h} = (\tilde{q_2}, -\tilde{q_1}, 0, \dots, 0) + \sum_{3 \leq i < j \leq g+1} b_{i,j}(q_je_i - q_ie_j) \;. \]
By definition of $\tilde{q_1}$ and $\tilde{q_2}$, it suffices to check equality in the $p$-th coordinate for $p \geq 3$.  Using the above equalities, we see that
\begin{align*} 
z\tilde{\ell}_p + v\tilde{h}_p &= z\ell_p + vh_p + a_{1,p}\tilde{q_1} + a_{2,p}\tilde{q_2}
\\
&= -\sum_{3 \leq i < p} a_{i,p}q_i + \sum_{i > p} a_{p,i}q_i - \sum_{i = 3}^{g+1} a_{1,p}a_{2,i}q_i + \sum_{i = 3}^{g+1} a_{2,p}a_{1,i}q_i 
\\
& = -\sum_{3 \leq i < p} (a_{i,p} + a_{1,p}a_{2,i} - a_{2,p}a_{1,i})q_i + \sum_{i > p} (a_{p,i} - a_{1,p}a_{2,i} + a_{2,p}a_{1,i})q_i
\\
& = -\sum_{3 \leq i < p} b_{i,p}q_i + \sum_{i > p} b_{p,i}q_i
\end{align*}
as required.  Hence, after replacing $q_1$ and $q_2$ as above, we may assume that $q_1 = -(z\ell_2 + vh_2)$, $q_2 = z\ell_1 + vh_1$, and $a_{1,j} = a_{2,j} = 0$ for all $j \geq 3$.

If $a_{i,j} \neq 0$ for some $3 \leq i < j \leq g+1$, then after relabeling the $q_i$ we may assume that $a_{3,4} \neq 0$.  Since $a_{1,j} = a_{2,j} = 0$ for all $j \geq 3$, arguing as in the preceding paragraph shows that we can replace $q_3$ and $q_4$ with $-(z\ell_4 + vh_4)$ and $z\ell_3 + vh_3$ respectively so that $I \subseteq (z, v, q_5, \dots, q_{g+1})$ has height at most $g-1$ by Krull's Height Theorem, contradicting that $I$ is an almost complete intersection.  Therefore, $a_{i,j} = 0$ for $3 \leq i < j \leq g+1$, and we see that $(q_2, -q_1, 0, \dots, 0) = z\ell + vh$ for some linear forms $z$ and $v$.

Suppose first that $z$ and $v$ are independent linear forms.  Then $z\ell_i + vh_i = 0$ for $i > 2$ implies that $(\ell_i, h_i) = a_i(v, -z)$ for some $a_i \in \kk$ so that $\ell = (\ell_1, \ell_2, a_3v, \dots, a_{g+1}v)$ and $h = (h_1, h_2, -a_3z, \dots, -a_{g+1}z)$.  If $a_i = 0$ for all $i$, then we would have two independent linear syzygies on $q_1$ and $q_2$, which we have already noted is impossible above.  Hence, after relabeling, we may assume that $a_3 \neq 0$.  Replacing $q_3$ with $a_3q_3 + \cdots + a_{g+1}q_{g+1}$, we may assume that $\ell = (\ell_1, \ell_2, v, 0, \dots, 0)$ and $h = (h_1, h_2, -z, 0, \dots, 0)$.  Therefore, we have $q_1 = -(z\ell_2 + vh_2)$ and $q_2 = z\ell_1 + vh_1$, and furthermore, $zq_3 = h_1q_1 + h_2q_2 = z(\ell_1h_2 - \ell_2h_1)$ implies $q_3 = \ell_1h_2 - \ell_2h_1$ so that $I = I_2(M) + (q_4, \dots, q_{g+1})$ where $M$ is the matrix
\begin{equation} \label{Hilbert:Burch:matrix} 
M = \begin{pmatrix} \ell_1 & h_1 \\ \ell_2 & h_2 \\ v & -z \end{pmatrix} \;.
\end{equation}

Suppose now that $v = cz$ for some $c \in \kk$.  Then after replacing $\ell$ with $\ell + ch$, we may assume that $(q_2, -q_1, 0, \dots, 0) = z\ell$ so that $q_1 = -z\ell_2$, $q_2 = z\ell_1$, and $\ell = (\ell_1, \ell_2, 0, \dots, 0)$.  Note that $\ell_1$ and $\ell_2$ must be independent linear forms or else $q_1$ and $q_2$ would not be independent.  On the other hand, we know that $\sum_{i=1}^{g+1} h_iq_i = 0$ so that $(\ell_1h_2 - \ell_2h_1)z \in (q_3, \dots, q_{g+1})$.  We claim that $z$ is a nonzerodivisor modulo $(q_3, \dots, q_{g+1})$ so that $\ell_1h_2 - \ell_2h_1 \in (q_3, \dots, q_{g+1})$.  

To see that the claim holds, we first note that $\hgt (q_3, \dots, q_{g+1}) = g-1$ so that $(q_3, \dots, q_{g+1})$ is a complete intersection.  Indeed, if this is not the case, then since $I \subseteq (z, q_3, \dots, q_{g+1})$ we would have $\hgt I \leq \hgt (q_3, \dots, q_{g+1}) + 1 \leq g-1$ by Krull's Height Theorem and \cite[III, Prop.~17]{Serre:local:algebra}, contradicting that $I$ is an almost complete intersection.  If $z$ were a zerodivisor modulo $(q_3, \dots, q_{g+1})$, then there would be an associated prime $P$ of $(q_3, \dots, q_{g+1})$ such that $I \subseteq (z, q_3, \dots, q_{g+1}) \subseteq P$ so that $\hgt I \leq g-1$ as $(q_3, \dots, q_{g+1})$ is unmixed, again contradicting that $I$ is an almost complete intersection.  And so, we see that $z$ must be a nonzerodivisor modulo $(q_3, \dots, q_{g+1})$ as claimed.  Write $\ell_1h_2 - \ell_2h_1 = a_3q_3 + \cdots + a_{g+1}q_{g+1}$ for some $a_i \in \kk$.  If $a_i = 0$ for all $i$, then $\ell_1h_2 - \ell_2h_1 = 0$ so that $(h_2, -h_1) = b(\ell_2, -\ell_1)$ for some $b \in \kk$ as $\ell_1$ and $\ell_2$ are independent linear forms.  In that case, we can replace $h$ with $h-b\ell$ and assume that $h = (0,0,h_3, \dots, h_{g+1})$ so that $q_3$ is a zerodivisor modulo $(q_4, \dots, q_{g+1})$.  However, we claim that this is impossible.  Indeed, by arguing as above, we see that $\hgt (q_4, \dots, q_{g+1}) = g-2$ so that $(q_4, \dots, q_{g+1})$ is a complete intersection, and so, if $q_3$ were a zerodivisor modulo $(q_4, \dots, q_{g+1})$, there would be an associated prime $P$ of $(q_4, \dots, q_{g+1})$ such that $(q_3, \dots, q_{g+1}) \subseteq P$ so that $\hgt (q_3, \dots, q_{g+1}) \leq g-2$ as $(q_4, \dots, q_{g+1})$ is unmixed, contradicting our earlier observation.  Hence, after relabeling, we may assume that $a_3 \neq 0$.  Replacing $q_3$ with $a_3q_3 + \cdots + a_{g+1}q_{g+1} = \ell_1h_2 - \ell_2h_1$, we see that $h = (h_1, h_2, -z, 0, \dots, 0)$ and $I = I_2(M) + (q_4, \dots, q_{g+1})$ where $M$ is the matrix of linear forms in \eqref{Hilbert:Burch:matrix} with $v = 0$.

In both of the above cases, it is easily checked that the Koszul syzygies involving any two of $q_1, q_2, q_3$ are non-minimal.  Let $W' \subseteq U_4$ denote the $\kk$-span of the other Koszul syzygies.  If $W' \cap S_+U \neq 0$, then there are linear forms $u$ and $w$ such that $u\ell + wh \in W'$ is nonzero.  Write 
\[ u\ell + wh = \sum_{j = 4}^{g+1} [b_{1,j}(q_je_1 - q_1e_j) + b_{2,j}(q_je_2 - q_2e_j)] + \sum_{3 \leq i < j \leq g+1} b_{i,j}(q_je_i - q_ie_j) \] 
for some $b_{i,j} \in \kk$.  Since $b_{i,j} \neq 0$ for some $j \geq 4$, reading off the $j$-th coordinate of the above equality yields a linear dependence relation on the $q_i$, which is a contradiction.  Hence, we must have $W' \cap S_+U = 0$ so that all of the Koszul syzygies involving at least one of $q_4, \dots, q_{g+1}$ are part of a minimal set of generators for $U$.  By Proposition \ref{Koszul:syzygies:span} and Lemma \ref{Koszul:algebras:have:subdiagonal:Betti:table}, it follows that $U = \syz_1^S(I)$ is minimally generated by all the Koszul syzygies involving at least one of $q_4, \dots, q_{g+1}$ together with the linear syzygies $\ell$ and $h$.  

If $f_{g+1} \in ((q_1, \dots, q_g): q_{g+1})$, then we can write $f_{g+1}q_{g+1} = -\sum_{i=1}^g f_iq_i$ for some $f_i \in S$ so that $(f_1, \dots, f_{g+1}) \in \syz_1^S(I)$.  It follows from the preceding paragraph that $f_{g+1} \in (q_1, \dots, q_g)$ so that $q_{g+1}$ is regular on $R' = S/(q_1, \dots, q_g)$.  As $\reg_{R'} R = 1$, it follows from Corollary \ref{passing:Koszulness:to:and:from:quotients} that $R'$ is also Koszul.  Moreover, because we can obtain the resolution of $R$ over $S$ by taking the mapping cone of multiplication by $q_{g+1}$ on the resolution of $R'$ over $S$, it follows that $\beta^S_{2,3}(R') = 2$ and that $(\ell_1,\ell_2,v,0,\dots,0)$ and $(h_1, h_2, -z, 0, \dots, 0)$ are the independent linear syzygies on $q_1,\dots, q_g$.  Hence, induction on $g$ implies that $q_4, \dots, q_{g+1}$ is a regular sequence on $S/I_2(M)$.  In particular, we see that $g = \hgt I = \hgt I_2(M) + g-2$ so that $\hgt I_2(M) = 2$.  Conversely, if $I = I_2(M) + (q_4, \dots, q_{g+1})$ for some $3 \times 2$ matrix of linear forms $M$ with $\hgt I_2(M) = 2$ and some regular sequence of quadrics $q_4, \dots, q_{g+1}$ on $S/I_2(M)$, then $S/I_2(M)$ has a Hilbert-Burch resolution by \cite[1.4.17]{Bruns:Herzog} so that $\reg_S(S/I_2(M)) = 1$, and it follows from Corollary \ref{passing:Koszulness:to:and:from:quotients} that $S/I_2(M)$, and hence also $R = S/I$, is Koszul.

If $F_\bullet$ denotes the minimal free resolution of $S/I_2(M)$ over $S$, we obtain the minimal resolution of $R$ by repeatedly taking the mapping cone of multiplication by $q_{i+1}$ on the resolution of $S/(q_1, \dots, q_i)$.  Since taking the mapping cone of multiplication by $q_{i+1}$ is the same as tensoring with the Koszul complex on $q_{i+1}$, we see that $F_\bullet \tensor_S K_\bullet(q_4, \dots, q_{g+1})$ is the minimal free resolution of $R$ over $S$, from which the Betti table is easily deduced.  In particular, we have
\[ \beta_i^S(R) = 2\binom{g-1}{i-1} + \binom{g-1}{i} = \binom{g}{i} + \binom{g-1}{i-1} \leq \binom{g}{i} + \binom{g}{i-1} = \binom{g+1}{i} \;. \]
Similarly, the multiplicity of $S/(q_1, \dots, q_{i+1})$ is twice the multiplicity of $S/(q_1, \dots, q_i)$ for $i \geq 3$, and so, since $e(S/I_2(M)) = 3$, we see that $e(R) = 3\cdot 2^{g-2}$.
\end{proof}

In the next section, we will show that every Koszul almost complete intersection has at most two linear syzygies so that the above results give a complete classification of Koszul almost complete intersections, and therefore, Question \ref{Betti:number:bound:for:Koszul:algebras} has an affirmative answer for Koszul almost complete intersections with any number of generators.  Assuming this result for the time being, we have the following corollary.

\begin{cor} \label{Koszul:ACI's:are:LG-quadratic}
Koszul almost complete intersections are LG-quadratic.
\end{cor}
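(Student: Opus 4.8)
The plan is to derive the statement by feeding the two structure theorems \ref{Koszul:algebras:with:one:linear:syzygy} and \ref{Koszul:ACI's:with:two:linear:syzygies}, together with the bound $\beta_{2,3}^S(R)\le 2$ proved in Theorem~\ref{linear:syzygies:of:quadric:ACIs}, into an extension of Caviglia's construction. Let $R=S/I$ be a Koszul almost complete intersection. Since $I$ is not a complete intersection, Corollary~\ref{nonCI:Koszul:algebras:have:linear:syzygies} gives $\beta_{2,3}^S(R)\ge 1$, so $\beta_{2,3}^S(R)\in\{1,2\}$. If $\beta_{2,3}^S(R)=1$, Theorem~\ref{Koszul:algebras:with:one:linear:syzygy} writes $R=B/(q_3,\dots,q_{g+1})$ with $B=S/(xz,zw)$ and $q_3,\dots,q_{g+1}$ a $B$-regular sequence of quadrics; if $\beta_{2,3}^S(R)=2$, Theorem~\ref{Koszul:ACI's:with:two:linear:syzygies} writes $R=B/(q_4,\dots,q_{g+1})$ with $B=S/I_2(M)$ and $q_4,\dots,q_{g+1}$ a $B$-regular sequence of quadrics. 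In either case $B$ is Koszul (it has regularity $1$ over $S$, via a Hilbert--Burch resolution in the determinantal case) and is defined by at most three quadrics, so $B$ is LG-quadratic; for $B=S/I_2(M)$ this is part of the classification of Koszul algebras defined by three quadrics in \cite{Koszul:algebras:defined:by:3:quadrics}. Hence it suffices to prove the following claim: if $B$ is LG-quadratic and $q_1,\dots,q_n$ is a regular sequence of quadrics on $B$, then $B/(q_1,\dots,q_n)$ is LG-quadratic.

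To prove the claim, write $B=A/(\ell_1,\dots,\ell_r)$ with $A$ G-quadratic and $\ell_1,\dots,\ell_r$ an $A$-regular sequence of linear forms, and lift each $q_i$ to a quadric $\tilde q_i\in A$. Since $\ell_1,\dots,\ell_r$ is $A$-regular and $q_1,\dots,q_n$ is $B$-regular, the sequence $\ell_1,\dots,\ell_r,\tilde q_1,\dots,\tilde q_n$ is $A$-regular, and because $A$ is graded, homogeneous regular sequences are permutable, so $\tilde q_1,\dots,\tilde q_n$ is itself an $A$-regular sequence. Following Caviglia, set $\tilde A=A[y_1,\dots,y_n]/(y_1^2+\tilde q_1,\dots,y_n^2+\tilde q_n)$, which is a free $A$-module on the squarefree monomials in $y_1,\dots,y_n$. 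An easy induction, using that $\tilde q_1$ is a nonzerodivisor on $A$ and that $\tilde A/(y_1)\iso (A/(\tilde q_1))[y_2,\dots,y_n]/(y_2^2+\overline{\tilde q}_2,\dots)$, shows that $y_1,\dots,y_n$ is an $\tilde A$-regular sequence with $\tilde A/(y_1,\dots,y_n)\iso A/(\tilde q_1,\dots,\tilde q_n)$. Modding out further by the images of $\ell_1,\dots,\ell_r$, which remain a regular sequence of linear forms there, we get $\tilde A/(y_1,\dots,y_n,\ell_1,\dots,\ell_r)\iso B/(q_1,\dots,q_n)$; thus $y_1,\dots,y_n,\ell_1,\dots,\ell_r$ is an $\tilde A$-sequence of linear forms realizing $B/(q_1,\dots,q_n)$ as a quotient of $\tilde A$, and it remains only to see that $\tilde A$ is G-quadratic.

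This last point is the main technical content, and it is a direct generalization of the argument already recalled in Section~\ref{basic:properties:and:examples}: choose a monomial order on the polynomial ring presenting $\tilde A$ that refines a quadratic initial ideal of the defining ideal of $A$ (after the required change of coordinates in the variables of $A$) and in which each $y_i$ exceeds every monomial in the variables of $A$. Then $\operatorname{in}(y_i^2+\tilde q_i)=y_i^2$ is coprime to the leading term of every other element of the proposed generating set, so by Buchberger's product criterion the quadratic Gröbner basis of $A$ together with the $y_i^2+\tilde q_i$ is a Gröbner basis consisting of quadrics, and $\tilde A$ is G-quadratic. This establishes the claim and hence the corollary. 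The only step I expect to require genuine care is the base case: confirming that $S/I_2(M)$ is LG-quadratic for a $3\times 2$ matrix of linear forms $M$ with $\hgt I_2(M)=2$. Barring the citation above, I would verify this by classifying such $M$ up to the action of $GL_3\times GL_2$ and of linear changes of coordinates into a short list of normal forms (Hankel/scroll matrices and the generic matrix), each of which is classically known to have a quadratic Gröbner basis of $2\times 2$ minors; everything else in the argument is routine bookkeeping with Gröbner bases and regular sequences.
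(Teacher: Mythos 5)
Your reduction to a single claim --- if $B$ is LG-quadratic and $q_1,\dots,q_n$ is a $B$-regular sequence of quadrics, then $B/(q_1,\dots,q_n)$ is LG-quadratic --- is sound, and your proof of that claim is correct: permutability of homogeneous regular sequences in the graded-local setting gives that the lifts $\tilde q_i$ form an $A$-regular sequence and that the $\ell_j$ remain regular modulo them; freeness of $\tilde A$ over $A$ on the squarefree $y$-monomials gives that $y_1,\dots,y_n$ is an $\tilde A$-sequence with the right quotient; and the coprime-leading-term argument for the G-quadraticity of $\tilde A$ is fine. This is a genuinely different organization from the paper, which instead builds a single G-quadratic genericization in each case and verifies directly that the specializing linear forms are a regular sequence (by a Cohen--Macaulayness-plus-height count when $\beta_{2,3}^S(R)=2$, and by a Hilbert series computation when $\beta_{2,3}^S(R)=1$). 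Your version is more modular and, in the case $\beta_{2,3}^S(R)=1$, arguably simpler, since $(xz,zw)=z\cdot(x,w)$ becomes a monomial ideal after a linear change of coordinates (take $x,z,w$ as variables if $z\notin\Span\{x,w\}$; otherwise $(x,w)=(z,w')$ for a suitable $w'$ and the ideal is $(z^2,zw')$), so that base ring is outright G-quadratic.

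The gap is the base case $B=S/I_2(M)$. The paper does not support your citation: what it attributes to \cite{Koszul:algebras:defined:by:3:quadrics} is the Betti-number bound for $g\le 3$ and the fact that such algebras have the Betti table of an edge ideal, neither of which implies LG-quadraticity, and indeed this corollary (in its $g=2$ instance) is precisely where LG-quadraticity of such rings gets proved. Your fallback --- classifying $3\times 2$ matrices of linear forms with $\hgt I_2(M)=2$ into normal forms and checking each --- is real unexecuted work (it amounts to Kronecker's classification of the associated pencils plus a case-by-case Gr\"obner verification) and is exactly what the paper's argument is engineered to avoid. The clean fix is the paper's genericization, which slots directly into your framework as the needed base case: $S/I_2(M)\iso \bigl(S[X]/I_2(X)\bigr)/(x_{ij}-m_{ij})$ where $X$ is the generic $3\times 2$ matrix; $S[X]/I_2(X)$ is G-quadratic by \cite{generic:ideals:of:minors:are:G-quadratic} and is Cohen--Macaulay of dimension $\dim S+4$, so the six linear forms $x_{ij}-m_{ij}$, which cut the dimension down to $\dim S-2=\dim S/I_2(M)$, have height six and hence form a regular sequence. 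With that substitution your argument goes through.
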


\begin{proof}
Let $R = S/I$ be a Koszul almost complete intersection, and assume first that $\beta_{2,3}^S(R) = 2$ so that $I = I_2(M) + (q_4, \dots, q_{g+1})$ for some $3 \times 2$ matrix $M = (m_{ij})$ of linear forms and $q_4, \dots, q_{g+1}$ a regular sequence of quadrics on $S/I_2(M)$.  Set $\tilde{S} = S[X][y_4, \dots, y_{g+1}]$ where $X = (x_{ij})$ is a $3 \times 2$ generic matrix, $\tilde{I} = I_2(X) + (y_4^2 + q_4, \dots, y^2_{g+1} + q_{g+1})$, and $A = \tilde{S}/\tilde{I}$.  If we choose a lexicographic order on $\tilde{S}$ with $x_{1,2} > x_{1,1} > x_{2,2} > x_{2,1} > x_{3,2} > x_{3,1}$ and $y_i$ greater than the variables in $S$ for all $i$, it follows from \cite[15.15]{Eisenbud} that $\init_>(\tilde{I}) = \init_>(I_2(X)) + (y_4^2, \dots, y_{g+1}^2)$ so that the 2-minors of $X$ together with the $y_i^2+q_i$ are a Gr\"obner basis for $\tilde{I}$ by \cite{generic:ideals:of:minors:are:G-quadratic}.  Hence, $A$ is G-quadratic.  Moreover, we also know that the $y_i^2+q_i$ form a regular sequence on $\tilde{S}/I_2(X)$, and since the latter ring is Cohen-Macaulay, we see that $A$ is Cohen-Macaulay.  If $J$ denotes the ideal of $A$ generated by the linear forms $x_{ij} - m_{ij}$ and $y_s$ for $i = 1, 2, 3$, $j = 1, 2$, and $4 \leq s \leq g+1$, then $A/J \iso R$ so that $R$ will be LG-quadratic if the linear forms generating $J$ are a regular sequence.  Since $A$ is Cohen-Macaulay and $\tilde{I}$ is also an almost complete intersection of height $g$, this follows from the fact that $\hgt J = \dim A - \dim R = \dim \tilde{S} - \dim S = g+4$ is the number of generators of $J$.

Assume now that $\beta_{2,3}^S(R) = 1$ so that $I = (xz, zw, q_3, \dots, q_{g+1})$ for some linear forms $x$, $z$, and $w$ and $q_3, \dots, q_{g+1}$ a regular sequence of quadrics on $S/(xz,zw)$.  In this case, we define $S_i = S[y_{i+1}, \dots, y_{g+1}]$ and $A_i = S_i/I_i$ for $0 \leq i \leq g+1$, where
\begin{align*}
I_0 &= (y_1z, y_2z, y_3^2 + q_3, \dots, y_{g+1}^2 + q_{g+1})
\\
I_1 &= (xz, y_2z, y_3^2 + q_3, \dots, y_{g+1}^2 + q_{g+1})
\\
I_i &= (xz, zw, q_3, \dots, q_i, y_{i+1}^2 + q_{i+1}, \dots, y_{g+1}^2 + q_{g+1}) \qquad (i \geq 2) \;.
\end{align*}
As above, we see that $\init_>(I_0) = (\init_>(z)y_1, \init_>(z)y_2, y_3^2, \dots, y_{g+1}^2)$ for any monomial order on $S_0$ in which the $y_i$ are greater than every monomial in $S$ so that the generators of $I_0$ are a Gr\"obner basis and $A_0$ is G-quadratic.  In addition, we have $A_{g+1} = R$ and $A_i/(y_{i+1}) \iso A_{i+1}$ for all $i < g+1$.  An initial ideal argument as above shows that $y_{i+1}^2+q_{i+1}, \dots, y_{g+1}^2 + q_{g+1}$ is a regular sequence on $S_i/(xz, zw, q_3, \dots, q_i)$ for $i \geq 2$, and similarly, the $y_j^2+q_j$ are a regular sequence on $S_0/(y_1z,y_2z)$ and $S_1/(xz,y_1z)$.   Consequently, $A_i$ is a Koszul almost complete intersection with $\beta_{2,3}^{S_i}(A_i) = 1$ and $\hgt I_i = g$ for all $i$.  It then follows from Theorem \ref{Koszul:algebras:with:one:linear:syzygy} that the $A_i$ have the same Betti table, hence the same $h$-polynomial $h(t)$, over their respective polynomial rings $S_i$.  Hence, the Hilbert series of $A_i$ is $H_{A_i}(t) = h(t)/(1-t)^{\dim A_i}$.  We then compute that $\dim A_i = \dim S_i - g = \dim S_{i+1} -g + 1 = \dim A_{i+1} + 1$ so that $(1-t)H_{A_i}(t) = H_{A_{i+1}}(t)$ for all $i < g+1$.  This implies that the natural sequence $0 \to A_i(-1) \stackrel{y_{i+1}}{\to} A_i \to A_{i+1} \to 0$ is exact so that $y_{i+1}$ is $A_i$-regular.  Therefore, we see that $y_1, \dots, y_{g+1}$ is an $A_0$-sequence, and $R$ is LG-quadratic.
\end{proof}

\section{Linear Syzygies of Quadratic ACI's}
\label{linear:syzygies:of:quadratic:ACI's}

The following proposition is similar in spirit to Theorem \ref{Koszul:ACI's:with:two:linear:syzygies}.  However, there are two important distinctions:  We do not assume that $R$ is Koszul, so we lose some information about the syzygies of the defining ideal $I$, and to make up for this loss of information, we must assume that the ground field is infinite.  But first, we make a simple observation which will be useful in the proof.

\begin{rmk} \label{swapping:elements:of:a:regular:sequence}
If $f_1, \dots, f_n \in S$ is a regular sequence of homogeneous forms of the same degree and $f = a_1f_1 + \cdots + a_nf_n$ for some $a_i \in \kk$ with $a_1 \neq 0$, then $((f_2, \dots, f_n) : f) = ((f_2, \dots, f_n): f_1)$ so that $f, f_2, \dots, f_n$ is also a regular sequence.
\end{rmk}

\begin{prop}
Suppose that $\kk$ is an infinite field and that $R = S/I$ is an almost complete intersection defined by quadrics with $\beta_{2,3}^S(R) \geq 2$.  Then there are quadrics $q_1, \dots, q_{g+1}$ and a $3 \times 2$ matrix of linear forms $M$ such that $I = (q_1, \dots, q_{g+1})$, $q_2, \dots, q_{g+1}$ is a regular sequence, and $I_2(M) = (q_1, q_2, q_3)$.
\end{prop}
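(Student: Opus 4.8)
The plan is to run essentially the same change-of-coordinates argument as in the proof of Theorem~\ref{Koszul:ACI's:with:two:linear:syzygies}, but only through the point where one extracts the $3 \times 2$ matrix $M$; the crucial gain is that here I may invoke genericity of the ground field to replace the product-structure input (Proposition~\ref{Koszul:syzygies:span}) which is no longer available. Write $I = (q_1, \dots, q_{g+1})$ with the $q_i$ linearly independent quadrics, and let $U = \syz_1^S(I)$, let $W \subseteq U_4$ be the span of the Koszul syzygies, and let $\ell, h \in U$ be two independent linear syzygies (which exist since $\beta_{2,3}^S(R) \geq 2$). As in the earlier proof, since $\beta_{2,4}^S(R) < \binom{g+1}{2}$ would be forced if $W \cap S_+U = 0$ — more precisely, since $I$ is not a complete intersection we cannot have all Koszul syzygies minimal — there are linear forms $z, v$, not both zero, with $z\ell + vh \in W$ nonzero. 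Expanding in the standard basis $e_1, \dots, e_{g+1}$ and relabeling so the coefficient of $q_2 e_1 - q_1 e_2$ is $1$, I replace $q_1, q_2$ by the combinations $\tilde q_1, \tilde q_2$ read off from the first two coordinates, correspondingly modify $\ell, h$ to $\tilde\ell, \tilde h$ (still independent), and — using that $I$ is an almost complete intersection, so $I \not\subseteq (z, v, q_5, \dots, q_{g+1})$ by Krull's Height Theorem — conclude exactly as before that all the remaining coefficients $a_{i,j}$ ($3 \le i < j$) vanish. Thus $(\,\tilde q_2, -\tilde q_1, 0, \dots, 0) = z\tilde\ell + v\tilde h$.

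Now I split into the two cases of that proof. If $z, v$ are independent, then $z\tilde\ell_i + v\tilde h_i = 0$ for $i \ge 3$ gives $(\tilde\ell_i, \tilde h_i) = a_i(v, -z)$; the $a_i$ are not all zero (else $\tilde q_1, \tilde q_2$ would have two independent linear syzygies, impossible for a two-generated ideal), so after relabeling and replacing $q_3$ by $\sum a_i q_i$ I get $\tilde\ell = (\tilde\ell_1, \tilde\ell_2, v, 0, \dots, 0)$, $\tilde h = (\tilde h_1, \tilde h_2, -z, 0, \dots, 0)$, and then $z q_3 = \tilde h_1 \tilde q_1 + \tilde h_2 \tilde q_2 = z(\tilde\ell_1 \tilde h_2 - \tilde\ell_2 \tilde h_1)$ forces $q_3 = \tilde\ell_1 \tilde h_2 - \tilde\ell_2 \tilde h_1$, so that $(q_1, q_2, q_3) = I_2(M)$ with
\[
M = \begin{pmatrix} \tilde\ell_1 & \tilde h_1 \\ \tilde\ell_2 & \tilde h_2 \\ v & -z \end{pmatrix}.
\]
If instead $v = cz$, replace $\tilde\ell$ by $\tilde\ell + c\tilde h$ to reduce to $v = 0$; the argument that $z$ is a nonzerodivisor modulo $(q_3, \dots, q_{g+1})$ and that $\tilde\ell_1 \tilde h_2 - \tilde\ell_2 \tilde h_1 \in (q_3, \dots, q_{g+1})$, and the sub-argument ruling out $\tilde\ell_1 \tilde h_2 - \tilde\ell_2 \tilde h_1 = 0$, go through verbatim because they use only that $I$ is an almost complete intersection (via unmixedness of complete intersections), not the Koszul hypothesis. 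So again, after relabeling $q_3$, we obtain $(q_1, q_2, q_3) = I_2(M)$ with $v = 0$ in $M$.

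It remains to arrange that $q_2, \dots, q_{g+1}$ is a regular sequence. Here is where $\kk$ infinite enters: by the argument already used in the proof of Theorem~\ref{Koszul:ACI's:with:two:linear:syzygies}, $(q_3, \dots, q_{g+1})$ is a height $g-1$ complete intersection (otherwise $I \subseteq (z, q_3, \dots, q_{g+1})$ would have height $\le g-1$), hence $q_3, \dots, q_{g+1}$ is a regular sequence of quadrics. I want to prepend an element playing the role of $q_2$. Since $\hgt(q_1, q_2, q_3) = \hgt I_2(M) = 2$ and $\hgt(q_3, \dots, q_{g+1}) = g-1$ while $\hgt I = g$, none of $q_1, q_2$ lies in every associated prime of $(q_3, \dots, q_{g+1})$; because $\kk$ is infinite and the associated primes form a finite set, a general $\kk$-linear combination $q_2' = \lambda q_1 + \mu q_2$ avoids all of them, hence is a nonzerodivisor on $S/(q_3, \dots, q_{g+1})$, so $q_2', q_3, \dots, q_{g+1}$ is a regular sequence. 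Relabeling $q_2'$ as $q_2$ and the old pair as generators together with $q_1$ (using that $q_1, q_2', q_3$ still generate $(q_1, q_2, q_3) = I_2(M)$, after possibly also adjusting $q_1$ by Remark~\ref{swapping:elements:of:a:regular:sequence} so the span is unchanged) gives the desired presentation. The only genuinely delicate point is this last prime-avoidance step — making sure a single generic combination simultaneously lies outside every associated prime of $(q_3, \dots, q_{g+1})$ while still, together with $q_1$ and $q_3$, generating the full ideal of $2\times 2$ minors — and it is precisely to have this freedom that the hypothesis $\kk$ infinite is imposed.
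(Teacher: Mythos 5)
There is a genuine gap at the very first step, and it is precisely the step your proposal claims to have patched. To get started you assert that there exist linear forms $z,v$, not both zero, with $z\ell + vh$ a nonzero element of the span $W$ of the Koszul syzygies, justifying this by ``since $I$ is not a complete intersection we cannot have all Koszul syzygies minimal.'' That statement is exactly Proposition \ref{Koszul:Betti:table:constraints}(b), whose proof (via the product structure on $\tor^S_*(R,\kk)$ and the fact that the diagonal of the Betti table of a Koszul algebra is generated by products) genuinely uses the Koszul hypothesis; it is not a formal consequence of $I$ failing to be a complete intersection, and no appeal to $\kk$ being infinite is made at this point in your argument. Since the whole point of this proposition is that $R$ is \emph{not} assumed Koszul, your argument never gets off the ground. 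The paper's proof is organized in the opposite order precisely to avoid this: it \emph{first} uses prime avoidance over the infinite field to choose generators $q_1,\dots,q_{g+1}$ with $q_2,\dots,q_{g+1}$ a regular sequence, and \emph{then} observes that for two independent linear syzygies $\ell,h$ the combination $h_1\ell - \ell_1 h$ has vanishing first coordinate, hence is a (degree $4$) syzygy on the regular sequence $q_2,\dots,q_{g+1}$ and is therefore automatically a $\kk$-linear combination of Koszul syzygies by exactness of the Koszul complex. That elementary observation is the substitute for Proposition \ref{Koszul:syzygies:span}/\ref{Koszul:Betti:table:constraints}(b), and it is the ingredient your proposal is missing.

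A secondary problem: in the case where $z$ and $v$ are independent you claim $\hgt(q_3,\dots,q_{g+1}) = g-1$ because otherwise $I \subseteq (z, q_3, \dots, q_{g+1})$ would have height at most $g-1$. But in that case you only have $q_1, q_2 \in (z,v)$, so the containment is $I \subseteq (z, v, q_3,\dots,q_{g+1})$, which gives $\hgt(q_3,\dots,q_{g+1}) \geq g-2$, not $g-1$; the paper makes the height-$(g-1)$ argument only in the degenerate case $v = cz$. So even granting the first step, your concluding prime-avoidance paragraph does not by itself produce the regular sequence $q_2,\dots,q_{g+1}$ demanded by the statement. Both defects are repaired simultaneously by building the regular sequence first, as the paper does.
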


\begin{proof}
Set $g = \hgt I$.  We note that $g \geq 2$, since otherwise we would have $I = (xz, yz)$ for some linear forms $x, y, z$ so that $\beta_{2,3}^S(R) = 1$.  First, we can find quadrics $q_1, \dots, q_{g+1}$ such that $I = (q_1, \dots, q_{g+1})$ and $q_2, \dots, q_{g+1}$ is a regular sequence.  Indeed, we can take $q_{g+1}$ to be any quadric in $I$.  Having found quadrics $q_i, \dots, q_{g+1} \in I$ with $i > 2$ forming a regular sequence, we know that $I$ is not contained in any associated prime of $(q_i, \dots, q_{g+1})$ since the latter ideal is unmixed of height $g-i+2 < g$.  Because $I$ is generated in degree two, this implies $I_2 \nsubseteq P$ for each associated prime $P$ of $(q_i, \dots, q_{g+1})$.  Since $\kk$ is infinite, $I_2$ is not a union of the proper subspaces $(I \cap P)_2$ for $P \in \ass(S/(q_i, \dots, q_{g+1}))$.   Hence, we can find a quadric $q_{i-1} \in I$ so that $q_{i-1}, \dots, q_{g+1}$ is a regular sequence.  So by induction we have a regular sequence of quadrics $q_2, \dots, q_{g+1}$ in $I$, and we can take $q_1$ to be any other quadric independent from $q_2, \dots, q_{g+1}$ since $I$ is minimally generated by $g+1$ quadrics.

Let $\ell$ and $h$ be be two independent linear syzygies on the $q_i$.  Then $h_1\ell - \ell_1h$ is  a syzygy on $q_2, \dots, q_{g+1}$ and, therefore, a linear combination of Koszul syzygies.  Write
\[ h_1\ell - \ell_1h = \sum_{2 \leq i < j \leq g+1} a_{i,j}(q_je_i - q_ie_j) \]
for some $a_{i,j} \in \kk$, where $e_1, \dots, e_{g+1}$ denotes the standard basis of $S(-2)^{g+1}$.  Note that $\ell_1$ and $h_1$ must be independent linear forms, otherwise we could find a nontrivial linear syzygy on $q_2, \dots, q_{g+1}$ since $\ell$ and $h$ are independent, but that contradicts that $q_2, \dots, q_{g+1}$ is a regular sequence.  If $h_1\ell - \ell_1h = 0$, then $(-h_i, \ell_i) = b_i(h_1, -\ell_1)$ for some $b_i \in \kk$ for all $i \geq 2$ so that $h = h_1(1, -b_2, \dots, -b_{g+1})$.  But then $(1, -b_2, \dots, -b_{g+1})$ must be a syzygy on the $q_i$, contradicting that they are independent quadrics.  Hence, we see that $h_1\ell - \ell_1h \neq 0$ so that $a_{i,j} \neq 0$ for some $i,j$.  Relabeling $q_2, \dots, q_{g+1}$ if necessary, we may assume that $a_{2,3} \neq 0$.  Then by Remark \ref{swapping:elements:of:a:regular:sequence}, we can replace $q_3$ with  $q = h_1\ell_2 - \ell_1h_2 = a_{2,3}q_3 + \cdots + a_{2,g+1}q_{g+1}$.  In exchanging $q_3$ for $q$, we must replace $\ell$ with $\tilde{\ell} = (\ell_1, \ell_2, a_{2,3}^{-1}\ell_3, \ell_4 - a_{2,3}^{-1}a_{2,4}\ell_3, \dots, \ell_{g+1} - a_{2,3}^{-1}a_{2,g+1}\ell_3)$ as
\[
0 = \sum_{i=1}^{g+1} \ell_iq_i = a_{2,3}^{-1}\ell_3q + \ell_1q_1 + \ell_2q_2 + \sum_{i=4}^{g+1} (\ell_i - a_{2,3}^{-1}a_{2,i}\ell_3)q_i
\]
and we also replace $h$ with the syzygy $\tilde{h}$ defined as above.  However, $\tilde{\ell}$ and $\tilde{h}$ are still independent linear syzygies since their first coordinates are independent linear forms.  After making the above changes, we have $a_{2,3} = 1$ and $a_{2,i} = 0$ for all $i > 3$.  Then $h_1\ell_3 - \ell_1h_3 = -q_2 + a_{3,4}q_4 + \cdots + a_{3,g+1}q_{g+1}$, and we can we replace $q_2$ with $-(h_1\ell_3 - \ell_1h_3)$ as above.  In that case, we have $q_2 = -(h_1\ell_3 - \ell_1h_3)$ and $q_3 = h_1\ell_2 - \ell_1h_2$ so that
\[ 0 = \sum_{i=1}^{g+1} \ell_iq_i = \ell_1(q_1 + \ell_2h_3 -\ell_3h_2) + \sum_{i=1}^4 \ell_iq_i \]
implies that $q_1 + \ell_2h_3 -\ell_3h_2 \in ((q_4, \dots, q_{g+1}): \ell_1)$.

We claim that $\ell_1$ is a nonzerodivisor modulo $(q_4, \dots, q_{g+1})$.  If not, then $\ell_1$ is contained in an associated prime of $(q_4, \dots, q_{g+1})$ so that $\hgt (\ell_1, q_4, \dots, q_{g+1}) = g-2$.  But then $\hgt (\ell_1, h_1, q_4, \dots, q_{g+1}) \leq \hgt (\ell_1, q_4, \dots, q_{g+1}) + 1 = g-1$, and since $(q_2, \dots, q_{g+1}) \subseteq (\ell_1, h_1, q_4, \dots, q_{g+1})$, this contradicts $\hgt (q_2, \dots, q_{g+1}) = g$.  Therefore, $\ell_1$ is a nonzerodivisor modulo $(q_4, \dots, q_{g+1})$ as claimed so that $q_1 + \ell_2h_3 -\ell_3h_2 \in (q_4, \dots, q_{g+1})$.  We can then write $\ell_2h_3 - \ell_3h_2 = -q_1 + c_4q_4 + \cdots + c_{g+1}q_{g+1}$ for some $c_i \in \kk$.  Replacing $q_1$ with $\ell_2h_3 - \ell_3h_2$ and setting 
\begin{equation} \label{Hilbert:Burch:matrix:2} 
M = \begin{pmatrix} \ell_1 & h_1 \\ \ell_2 & h_2 \\ \ell_3 & h_3 \end{pmatrix}
\end{equation}
yields $I_2(M) = (q_1, q_2, q_3)$ as wanted.
\end{proof}

\begin{thm} \label{linear:syzygies:of:quadric:ACIs}
If $R = S/I$ is a quadratic almost complete intersection, then $\beta^S_{2,3}(R) \leq 2$.
\end{thm}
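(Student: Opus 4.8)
The plan is to deduce the bound from the structure theorem in the previous proposition, after reducing to an infinite ground field. If $\kk'$ is any infinite extension of $\kk$, then $S' = S \otimes_\kk \kk'$ is again a standard graded polynomial ring, $I' = IS'$ is again generated by quadrics, and flat base change gives $\beta_{i,j}^{S'}(S'/I') = \beta_{i,j}^S(R)$ for all $i, j$; moreover $\hgt I' = \hgt I$ and the minimal number of generators of $I'$ equals that of $I$, so $S'/I'$ is still a quadratic almost complete intersection. Hence it suffices to treat the case that $\kk$ is infinite. There, if $\beta_{2,3}^S(R) \leq 1$ there is nothing to prove, so assume $\beta_{2,3}^S(R) \geq 2$ and set $g = \hgt I$. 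The preceding proposition then supplies quadrics $q_1, \dots, q_{g+1}$ and a $3 \times 2$ matrix of linear forms $M$ such that $I = (q_1, \dots, q_{g+1})$, the sequence $q_2, \dots, q_{g+1}$ is $S$-regular, and $(q_1, q_2, q_3) = I_2(M)$, where $q_1, q_2, q_3$ are, up to sign, the $2 \times 2$ minors of $M$.

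The next step is to pin down the homology of $S/I_2(M)$. Because $q_2, q_3$ is part of a regular sequence, $\hgt (q_2, q_3) = 2$; since $(q_2, q_3) \subseteq I_2(M)$ and the ideal of $2 \times 2$ minors of a $3 \times 2$ matrix has height at most $2$, it follows that $\hgt I_2(M) = 2$. The Hilbert-Burch theorem (\cite[1.4.17]{Bruns:Herzog}) then gives the minimal free resolution
\[ 0 \longto S(-3)^2 \stackrel{M}{\longto} S(-2)^3 \stackrel{(q_1,q_2,q_3)}{\longto} S \longto S/I_2(M) \longto 0 \; , \]
so $S/I_2(M)$ is Cohen-Macaulay of codimension $2$ and the module of syzygies on $q_1, q_2, q_3$ is free on the two columns of $M$; in particular, the space of linear syzygies on $q_1, q_2, q_3$ is exactly $2$-dimensional over $\kk$.

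The heart of the proof is to show that every linear syzygy of $I$ has vanishing coefficients on $q_4, \dots, q_{g+1}$. Write $\bar S = S/I_2(M)$. Since $\dim R = \dim S - g = \dim \bar S - (g-2)$ and $\bar S$ is Cohen-Macaulay, the images $\bar q_4, \dots, \bar q_{g+1}$ form a regular sequence on $\bar S$, so $\syz_1^{\bar S}(\bar q_4, \dots, \bar q_{g+1})$ is generated by Koszul syzygies and hence concentrated in internal degrees $\geq 4$; as $\bar S$ has no components in negative degree, this module vanishes in degree $3$. Now let $a = (a_1, \dots, a_{g+1})$ with each $a_i \in S_1$ be a linear syzygy on $q_1, \dots, q_{g+1}$. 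Then $\sum_{i \geq 4} a_i q_i = -(a_1 q_1 + a_2 q_2 + a_3 q_3) \in I_2(M)$, so $(\bar a_4, \dots, \bar a_{g+1})$ is a degree-$3$ element of $\syz_1^{\bar S}(\bar q_4, \dots, \bar q_{g+1})$ and therefore zero; hence each $a_i$ with $i \geq 4$ lies in $I_2(M)$ and, being linear, must vanish since $I_2(M)$ is generated in degree $2$. Thus $a = (a_1, a_2, a_3, 0, \dots, 0)$ with $(a_1, a_2, a_3)$ a linear syzygy on $q_1, q_2, q_3$, so the space of linear syzygies on $q_1, \dots, q_{g+1}$ embeds into a $2$-dimensional space. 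Since the $q_i$ are linearly independent over $\kk$, there are no syzygies of internal degree $2$, and hence $\beta_{2,3}^S(R)$ equals the dimension of this space of linear syzygies; we conclude $\beta_{2,3}^S(R) \leq 2$.

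I expect the middle step to be the crux: everything rests on $I_2(M)$ attaining the maximal possible height $2$, which makes $S/I_2(M)$ Cohen-Macaulay of codimension $2$ and forces $q_4, \dots, q_{g+1}$ to remain a regular sequence modulo $I_2(M)$. Granting that, the degree count for Koszul syzygies shows that every linear syzygy of $I$ is already recorded by the Hilbert-Burch matrix $M$, whose column space is only $2$-dimensional. The remaining points to check are routine: that base change preserves the almost-complete-intersection hypothesis, and that $\beta_{2,3}^S(R)$ really is computed by linear first syzygies without cancellation, which holds because $I$ is minimally generated by linearly independent quadrics and so has no syzygies in internal degrees below $3$.
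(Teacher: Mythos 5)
Your proof is correct, but the heart of your argument is genuinely different from the paper's. Both proofs share the same setup: reduce to an infinite ground field by flat base change and invoke the preceding proposition to write $I = (q_1, \dots, q_{g+1})$ with $q_2, \dots, q_{g+1}$ a regular sequence and $(q_1, q_2, q_3) = I_2(M)$ for a $3 \times 2$ matrix of linear forms $M$. From there, the paper argues by contradiction: assuming a third linear syzygy $u$ independent from the two columns of $M$, it forms the quadric syzygies $u_1\ell - \ell_1 u$ and $h_1\ell - \ell_1 h$ on the regular sequence $q_2, \dots, q_{g+1}$ and, through several rounds of relabeling and replacing generators, traps $I$ inside an ideal of height at most $g-1$. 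You instead note that $\hgt I_2(M) = 2$ is forced (it contains the height-two ideal $(q_2, q_3)$ and is bounded above by $2$ as the ideal of maximal minors of a $3 \times 2$ matrix), so Hilbert--Burch applies: $S/I_2(M)$ is Cohen--Macaulay of codimension $2$ and $\syz_1^S(q_1,q_2,q_3)$ is free on the columns of $M$. A dimension count then makes $\bar q_4, \dots, \bar q_{g+1}$ a regular sequence on $\bar S = S/I_2(M)$, whose first syzygy module is generated by Koszul relations and hence vanishes in internal degree $3$; this kills the last $g-2$ coordinates of any linear syzygy of $I$ and leaves only the two-dimensional span of the columns of $M$. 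The routine points you flag (base change preserving the ACI hypothesis, and $\beta_{2,3}^S(R)$ equaling the dimension of the space of linear syzygies because the $q_i$ are linearly independent) are handled correctly. Your route is more structural and, as a byproduct, recovers the fact that $q_4, \dots, q_{g+1}$ is a regular sequence modulo $I_2(M)$, which the paper establishes separately in the course of Theorem \ref{Koszul:ACI's:with:two:linear:syzygies}; the paper's coordinate manipulation, by contrast, stays entirely at the level of explicit syzygies and height estimates and does not need the Cohen--Macaulay machinery at this stage.
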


\begin{proof}
Suppose that $\beta^S_{2,3}(R) \geq 3$.  Let $K$ be an infinite extension field of $\kk$, and for each $\kk$-algebra $A$, set $A_K = A \tensor_\kk K$.  Then $\beta_{2,3}^{S_K}(R_K) = \beta_{2,3}^S(R)$, $\dim R_K = \dim R$, and $IS_K/I(S_K)_+ \iso I/IS_+ \tensor_S S_K \iso I/IS_+ \tensor_\kk K$ by faithfully flat base change so that $IS_K$ is still a quadratic almost complete intersection, and replacing $R$ with $R_K$, we may assume that the ground field $\kk$ is infinite.

By the preceding proposition, there are quadrics $q_1, \dots, q_{g+1}$ for $g = \hgt I \geq 2$ and a $3 \times 2$ matrix of linear forms $M$ as in \eqref{Hilbert:Burch:matrix:2} such that $I = (q_1, \dots, q_{g+1})$, $q_2, \dots, q_{g+1}$ is a regular sequence, and $I_2(M) = (q_1, q_2, q_3)$.  We may assume that $q_1, q_2, q_3$ are the minors of $M$ as in the proof of the proposition.  In that case, $\ell = (\ell_1, \ell_2, \ell_3, 0, \dots, 0)$ and $h = (h_1, h_2, h_3, 0, \dots, 0)$ are two independent linear syzygies on the $q_i$.  Let $u$ be a linear syzygy independent from $\ell$ and $h$.  By arguing as in the proof of the preceding proposition, we see that $u_1$, $\ell_1$, and $h_1$ are independent linear forms and that $u_1\ell - \ell_1u \neq 0$.  

We claim that $u_1\ell - \ell_1u$ is linear independent from $h_1\ell - \ell_1h$.  If not, then $u_1\ell - \ell_1u = c(h_1\ell - \ell_1h)$ for some nonzero $c \in \kk$.  Setting $\tilde{u} = u -ch$ and rearranging the preceding equality, we have that $\tilde{u}$ is a linear syzygy independent from $\ell$ and $h$ with $\tilde{u}_1\ell - \ell_1\tilde{u} = 0$, which is impossible as already noted in the previous paragraph.  Hence, $u_1\ell - \ell_1u$ is independent from $h_1\ell - \ell_1h$ as claimed.  In particular, we note that $g \geq 3$ since there cannot be two independent quadric syzygies on the regular sequence $q_2, q_3$.  Write
\[ u_1\ell - \ell_1u = \sum_{2 \leq i < j \leq g+1} b_{i,j}(q_je_i - q_ie_j) \]
for some $b_{i,j} \in \kk$.  Since $h_1\ell - \ell_1h = (0, q_3, -q_2, 0, \dots, 0)$ by assumption, we must have $b_{i,j} \neq 0$ for some $(i,j) \neq (2,3)$, otherwise we would have a contradiction to the claim.  In fact, by replacing $u$ with $u - b_{2,3}h$, we may assume that $b_{2,3} = 0$.

Next, we claim that there is a $j \geq 4$ such that $b_{2,j} \neq 0$ or $b_{3,j}  \neq 0$.  If not, the first three coordinates of $u_1\ell - \ell_1u$ must be zero so that $u_1\ell - \ell_1u = (0,0,0,-\ell_1u_4, \dots, -\ell_1u_{g+1})$.  But this implies that $(u_4, \dots, u_{g+1})$ is a linear syzygy on $q_4, \dots, q_{g+1}$, which is impossible since $q_4, \dots, q_{g+1}$ is a regular sequence.  Hence, relabeling if necessary, we may assume that $b_{2,4} \neq 0$ so that we can replace $q_4$ with $u_1\ell_2 - \ell_1u_2 = b_{2,4}q_4 + \cdots + b_{2,g+1}q_{g+1}$.

Finally, we claim that there is a $j \geq 5$ such that $b_{3,j} \neq 0$.  If not, then $u_1\ell_3 - \ell_1u_3 = b_{3,4}q_4 = b_{3,4}(u_1\ell_2 - \ell_1u_2)$ as $b_{2,3} = 0$.  But this implies that $(b_{3,4}u_2 -u_3, \ell_3 - b_{3,4}\ell_2) = r(u_1, -\ell_1)$ for some $r \in \kk$ as $\ell_1$ and $u_1$ are independent linear forms.  In particular, we see that $\ell_3 \in \Span\{\ell_1, \ell_2\}$ so that $I \subseteq (\ell_1, \ell_2, q_5, \dots, q_{g+1})$.  However, $\hgt (\ell_1, \ell_2, q_5, \dots, q_{g+1}) \leq g-1$, which contradicts $\hgt I = g$.  Hence, we must have $g \geq 4$, and after relabeling, we may assume that $b_{3,5} \neq 0$ and replace $q_5$ with $u_1\ell_3 - \ell_1u_3 = b_{3,4}q_4 + b_{3,5}q_5 + \cdots + b_{3,g+1}q_{g+1}$.  But then $I \subseteq (\ell_1, \ell_2, \ell_3, q_6, \dots, q_{g+1})$ and $\hgt (\ell_1, \ell_2, \ell_3, q_6, \dots, q_{g+1}) \leq g-1$, contradicting that $\hgt I = g$.  Therefore, we must have $\beta_{2,3}^S(R) \leq 2$.
\end{proof}

\section{Future Directions}

Using our main result, we have reason to believe that Question \ref{Betti:number:bound:for:Koszul:algebras} can be answered affirmatively for Koszul algebras defined by $g = 4$ quadrics.  However, Example \ref{Koszul:but:not:LG-quadratic} points to unexpected difficulties when $g = 5$.  

Our work also has unintended connections to various other conjectures of interest. For example, the Buchsbaum-Eisenbud-Horrocks Conjecture asks whether $\beta_i^S(R) \geq \binom{c}{i}$ for all $i$ if $c = \hgt I$.  This conjecture is already known to hold for quotients by monomial ideals. An even larger lower bound $\beta_i^S(R) \geq \binom{c}{i} + \binom{c-1}{i-1}$ was given for monomial ideals $I$ of finite colength that are not complete intersections by Charalambous and Evans in \cite{Charalambous}\cite{Charalambous:Evans} and for almost complete intersections directly linked to a complete intersection by Dugger in \cite[2.3]{Dugger}.  Among Koszul almost complete intersections $R = S/I$, those with one linear syzygy cannot be directly linked to a complete intersection since they are not Cohen-Macaulay.  Nonetheless, they still satisfy this larger bound.  For Koszul ACI's with two linear syzygies, at least when the ground field is infinite, the results of the previous section show that $I$ is directly linked to the complete intersection $(\ell_1, h_1, q_4, \dots, q_{g+1})$, and Theorem \ref{Koszul:ACI's:with:two:linear:syzygies} yields that Dugger's result is sharp.  A total rank version of the Charalambous-Evans bound $\sum_i \beta_i(R) \geq 2^c + 2^{c-1}$ has been studied in \cite{Charalambous:Evans:Miller}, and Boocher and Seiner have recently established this bound for all monomial ideals which are not complete intersections.  Our work affirmatively answers a couple questions they pose \cite[1.2, 1.3]{lower:bounds:for:Betti:numbers} in the Koszul ACI case.  

Another consequence of our structure theorem is that the EGH Conjecture holds for Koszul almost complete intersections in a strong form similar to \cite[2.1]{EGH:for:quadratic:monomial:ideals}.  This conjecture asks whether, given a graded ideal $I \subseteq S = \kk[x_1, \dots, x_n]$ containing a homogeneous regular sequence $f_1, \dots, f_r$ of degrees $2 \leq d_1 \leq \cdots \leq d_r$, there is a monomial ideal $J$ containing $x_1^{d_1}, \dots, x_r^{d_r}$ and having the same Hilbert function as $I$.  The EGH Conjecture is known to hold when $I$ is a quadratic monomial ideal by the preceding paper, when $I$ is a complete intersection of quadrics, or when $I$ is generated by products of linear forms \cite{EGH:for:products:of:linear:forms}, which covers quadratic ideals of height one.  To this list, we add that, if $I$ defines a Koszul almost complete intersection, we can simply take $J = (x_1^2, \dots, x_g^2, x_gx_{g+1})$ or $J = (x_1^2, \dots, x_g^2, x_{g-1}x_g)$ according to whether $I$ has one or two linear syzygies respectively.  Note for the former case that $g+1 = \pd_S R \leq n$ by Hilbert's Syzygy Theorem.  

Given the previous and new evidence for the above conjectures in the Koszul case, it is natural to ask whether they hold at the very least for Koszul algebras in general.

\section*{Acknowledgments}

The author would like to thank his advisor, Hal Schenck, for many helpful comments and discussions throughout the development of this work.  He would also like to thank Aldo Conca for pointing out a mistake in an earlier draft of this paper and the anonymous referee for helping to simplify the exposition and clarify the history of various results cited.  Evidence for the results in this paper was provided by several computations in \texttt{Macaulay2} \cite{Macaulay2}.

\end{spacing}

\end{document}